\newtheorem{thm}{Theorem}[section]
\newtheorem{defn}{Definition}[section]
\newtheorem{lemma}{Lemma}[section]
\newtheorem{cor}{Corollary}[section]
\title{\textbf{Gradient Flow in Morse and Analytic Settings}}
\author[1]{Leyang Zhang \\ email \href{mailto:leyangz_hawk@outlook.com}{leyangz\_hawk@outlook.com}}
\affil[1]{Department of Mathematics, College of Liberal Arts \& Sciences, University of Illinois Urbana-Champaign}
\date{}
\begin{document}
\maketitle

\begin{abstract}
    This report is actually the in-class project of MATH489 FA2022, \textit{Dynamics and Differential Equations} in University of Illinois Urbana-Champaign. The report is mainly about gradient flow of a Morse-type functions (Morse functions, Morse--Bott functions, etc.) and analytic functions. Several small results that I develop during the reading are included as well. They have a focus on the convergence, limiting rate and direction of gradient flow in either setting. 
\end{abstract}

\section{Introduction}

Given a differentiable function $f: \bR^n \to \bR$, the gradient vector of $f$ at each $p \in \bR^n$, $\nabla f(p)$, points at the direction of the fastest value change. More generally, this can be formalized on a Riemannian manifold $(M, g)$, where $g$ is the Riemannian metric. If $f$ is smooth, the gradient vectors on $M$ form a (smooth) vector field to which we can define a flow. This is called the gradient flow of $f$ (more formal definition will be given in the next section). \\

Gradient vector fields and the gradient flow have received studies both in Mathematics and engineering, where various special properties have been discovered. These properties make gradient flow a widely used method in optimization for finding (local or global) minimum of a function. In this report, we will focus on gradient flows in Morse-type function and analytic function settings. In section \ref{section 2}, I make the basic definitions and assumptions. In the following section, I list the Morse theories I read and present several corollaries I made from them. In section \ref{section 4} , I list results related to gradient flows of Morse-Smale and Morse--Bott functions, as well as results I made when reading about them; proofs will be given. All these results are based on section \ref{section 3}. In section \ref{section 5}, I focus instead on functions satisfying Lojasiewicz inequality, which include (real) analytic and generalized Morse--Bott functions. Again, results I read and I developed are included. \\

All the results \textit{with proofs} are what I came up with during my reading. It is believed that some of them may already exist; I still included them as the proofs could be different, and I hope that an alternative proof may help the reader gain some new understandings to these results.  

\section{Definition and Assumption}\label{section 2} 

We make the following definitions and assumptions throughout the report. Let $M$ be an $n$-dimensional (smooth) manifold. We say $f: M \to \bR$ is a smooth map if all orders of its derivatives exist and are continuous. $p\in M$ is called a critical point if all partial derivatives of $f$ vanish at $p$. Moreover, given $p\in M$ and a chart $(U, (x_i)_{i=1}^n)$, $\text{Hess} f(p) := \left( \frac{\partial^2 f}{\partial x_j \partial x_j} |_p \right)_{i,j}$ is the Hessian of $f$. We are interested in non-degenerate critical points which we define below. 

\begin{defn}
    Let $p \in M$ be a critical point of $f$. $p$ is called non-degenerate if $\text{Hess} f(p)$ is non-singular. 
\end{defn} 

Let $A$ be a bilinear functional. we say the index of $A$, $\text{ind}(A)$ is the maximal dimension of subspaces on which it is negative definite, and the nullity of $A$ is the dimension of its kernel. Thus, a critical point $p$ of $f$ s non-degenerate if the nullity of $\text{Hess} f(p)$ is 0. Also, we define the index of a critical point $p$ of $f$ to be $\text{ind}(\text{Hess} f(p))$. If $\text{ind}(\text{Hess} f(p)) = 0$, $p$ is called a local minimum of $f$, if $\text{ind}(\text{Hess} f(p)) \in (0, n)$, $p$ is called a saddle and if $\text{ind}(\text{Hess} f(p)) = n$, $p$ is called a local maximum of $f$. 

\begin{defn}
    Let $f: M \to \bR$ be smooth. $f$ is called a Morse function if all its critical points are non-degenerate. 
\end{defn}

Let $(M, g)$ be a Riemannian manifold with Riemannian metric $g$. Consider a smooth function $f: M\to \bR$. We are interested in the gradient flow (GF) of $f$, which is a dynamical system described by 
\begin{equation}\label{GF defn}
    \gamma: (a,b) \to M, \parf{d\gamma}{dt} + \nabla f(\gamma(t)) = 0
\end{equation}
where $-\infty \leq a < b \leq \infty$ and $\nabla f$ is the gradient vector field on $M$ determined by 
\begin{equation}
    \< \nabla f(p), v \> = d_p f(v) \quad \quad \forall\, v \in T_p M, \forall\, p\in M. 
\end{equation}
We write $\gamma$ for both the function and its image and $\gamma_p$ to be the solution to (\ref{GF defn}) with initial value $p \in M$. 

\textbf{Remark. } When we discuss GF on $M$, we will assume that $M$ has a Riemannian metric $g$. 

\begin{defn}
    Let $f: M\to \bR$ be smooth and consider the system (\ref{GF defn}). Given any critical point $p \in M$, define 
    \begin{equation}
        W^s (p) := \{x \in M: \lim_{t\to\infty} \gamma_x(t) = p \} 
    \end{equation}
    to be the stable manifold of $p$ and 
    \begin{equation}
        W^u (p) := \{x \in M: \lim_{t\to-\infty} \gamma_x(t) = p\}
    \end{equation} 
    to be the unstable manifold of $p$. 
\end{defn}

\section{Basic Morse Theory}\label{section 3} 

We begin with the Morse lemma, which states that locally near a non-degenerate critical point of a smooth $f: M \to \bR$, $f$ ``looks like" a quadratic function. 

\begin{lemma}[Morse Lemma] Let $f: M \to \bR$ be smooth. Let $p\in M$ be a non-degenerate critical point of $f$ with $\text{ind}(\text{Hess} f(p)) = \lambda$. There is a coordinate system $(U, \varphi = (x_i)_{i=1}^n)$ centered at $p$, such that 
\begin{equation*}
    f \circ \varphi(x_1, ..., x_n) = f(p) + \left( -\sum_{i=1}^\lambda y_i^2 \right) + \left( \sum_{i=\lambda+1}^n y_i^2 \right). 
\end{equation*}
\cite{Milnor}
\end{lemma}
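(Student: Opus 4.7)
The plan is to follow the standard inductive diagonalization due to Morse, reducing $f$ to a sum of squares via repeated completions of the square and finishing with Sylvester's law of inertia. First I would work in an arbitrary coordinate chart centered at $p$, so we may assume $M$ is an open neighborhood of $0 \in \bR^n$, that $p = 0$, and (by subtracting $f(p)$) that $f(0) = 0$. Since $0$ is a critical point, applying Hadamard's lemma twice produces smooth functions $h_{ij}$ with
\begin{equation*}
    f(x) = \sum_{i,j=1}^n x_i x_j h_{ij}(x).
\end{equation*}
Symmetrizing via $\tilde h_{ij} = (h_{ij} + h_{ji})/2$ and computing second partials at the origin shows $(\tilde h_{ij}(0))_{i,j} = \tfrac{1}{2}\text{Hess}f(0)$, which is nonsingular by non-degeneracy.

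Next I would set up the induction: assume coordinates have been found on a neighborhood of $0$ so that
\begin{equation*}
    f = \pm y_1^2 \pm \cdots \pm y_{r-1}^2 + \sum_{i,j \geq r} y_i y_j H_{ij}(y),
\end{equation*}
with $H_{ij}$ smooth and symmetric and $(H_{ij}(0))_{i,j \geq r}$ nonsingular (this nonsingularity propagates from the initial step because the block in question is a congruence transform of the corresponding block of the original Hessian). To advance one step, first perform a linear change in $y_r, \ldots, y_n$ so that $H_{rr}(0) \neq 0$; then introduce
\begin{equation*}
    u_r = \sqrt{|H_{rr}(y)|}\left( y_r + \sum_{i > r} y_i \frac{H_{ir}(y)}{H_{rr}(y)} \right),
\end{equation*}
leaving the remaining coordinates unchanged. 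Because $H_{rr}(0) \neq 0$, the square root is smooth near $0$, and the Jacobian of $(y_1,\ldots,y_{r-1},u_r,y_{r+1},\ldots,y_n)$ with respect to the old coordinates is lower triangular with nonzero diagonal. By the inverse function theorem this is a bona fide change of chart. A direct computation then shows $\pm u_r^2$ equals the total contribution of all terms in the sum involving $y_r$, so after this substitution $f$ acquires one more diagonal square and the leftover quadratic form lives in one fewer variable, with a still-nonsingular leading matrix.

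After $n$ iterations, we obtain a chart in which $f = \sum_i \varepsilon_i y_i^2$ with each $\varepsilon_i \in \{+1, -1\}$. Reordering the coordinates by a permutation places all the negatives first. The count $\lambda'$ of negative signs obtained this way must equal the index $\lambda$ of $\text{Hess}f(p)$: in the final coordinates the Hessian at $0$ is $\text{diag}(2\varepsilon_1,\ldots,2\varepsilon_n)$, and because the index of a symmetric bilinear form is invariant under change of basis (Sylvester's law of inertia), $\lambda' = \lambda$. Translating and subtracting $f(p)$ was harmless, so adding $f(p)$ back gives the claimed expression.

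The main obstacle is the inductive step, specifically verifying that the proposed nonlinear substitution really is a local diffeomorphism and that the leftover quadratic form inherits a nonsingular leading matrix at $0$. Both rely critically on the nonsingularity of $(\tilde h_{ij}(0))$, which is precisely where the non-degeneracy hypothesis enters; without it, neither the square root nor the inverse function theorem step would go through.
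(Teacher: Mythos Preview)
Your argument is correct and is precisely the classical proof from Milnor's \emph{Morse Theory}, which is exactly what the paper cites in lieu of giving its own proof; so your approach matches the paper's. One tiny slip: the Jacobian of your substitution at the origin is upper (not lower) triangular in the natural ordering, but this does not affect the invertibility conclusion.
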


Due to this result, we can immediately obtain the following results. 

\begin{cor}
    Let $f: M \to \bR$ be smooth. The following results hold. 
    \begin{itemize}
        \item [(a)] The non-degenerate critical points of $f$ are isolated. 

        \item [(b)] The set of non-degenerate critical points of $f$ is locally finite. In particular, if $M$ is compact, there are finitely many non-degenerate critical points of $f$. 

        \item [(c)] If $M$ is compact and $f$ is a Morse function, then $f$ has only finitely many critical points. 
    \end{itemize}
    \cite{Milnor}
\end{cor}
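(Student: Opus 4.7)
The plan is to derive all three assertions directly from the Morse lemma. Let $p$ be a non-degenerate critical point of $f$ with index $\lambda$, and let $(U,\varphi=(y_1,\ldots,y_n))$ be the Morse chart centered at $p$ provided by the lemma. In these coordinates $f\circ\varphi^{-1}$ has the explicit quadratic form $f(p)-\sum_{i=1}^{\lambda} y_i^{2}+\sum_{i=\lambda+1}^{n} y_i^{2}$, whose partial derivatives $\pm 2y_i$ vanish simultaneously only at the origin. Since a point $q\in U$ is a critical point of $f$ exactly when all partial derivatives of $f\circ\varphi^{-1}$ vanish at $\varphi(q)$, we conclude that $p$ is the unique critical point of $f$ in the neighborhood $U$; this establishes (a).

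For (b), the statement just proved gives each non-degenerate critical point an open neighborhood containing no other critical point at all, so certainly no other non-degenerate critical point, which is precisely local finiteness of the set of non-degenerate critical points. For the compactness clause I would argue by contradiction: an infinite family of non-degenerate critical points would, by sequential compactness of $M$, accumulate at some $q\in M$, and continuity of $\nabla f$ would force $\nabla f(q)=0$. If $q$ is itself non-degenerate, the Morse chart produced in (a) contains no critical point other than $q$, contradicting that $q$ is an accumulation point of distinct critical points. Part (c) is then immediate: under the Morse hypothesis every critical point is non-degenerate, so the compact case of (b) applies to the full critical set.

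The main obstacle I anticipate lies in (b): the accumulation argument does not exclude that the limit $q$ is a \emph{degenerate} critical point of a general smooth $f$, in which case the Morse lemma is unavailable at $q$ and the contradiction sketched above breaks down. To make the compactness portion of (b) airtight one must either tighten the statement, impose extra regularity on $f$, or restrict outright to the Morse setting. The issue disappears in (c) since Morse functions admit no degenerate critical points, so that route yields the cleanest proof of the corollary as a whole.
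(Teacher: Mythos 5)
Your derivation of (a) is the standard one and is correct: in a Morse chart the gradient is $(\mp 2y_1,\ldots,\pm 2y_n)$, which vanishes only at the origin, so the chart domain isolates the critical point. Your treatment of (c) is also fine once (b) is available in the Morse case. The paper itself cites Milnor for this corollary and gives no proof, so there is no in-text argument to compare against; the relevant comparison is whether your reasoning is sound and whether the gap you flag is real.

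The gap you identify in (b) is genuine, and it is good that you did not paper over it. Statement (a) gives you that the set $S$ of non-degenerate critical points is discrete, i.e.\ each $p\in S$ has a neighborhood meeting $S$ only in $\{p\}$. That is the honest content of ``locally finite'' here, and it does \emph{not} by itself imply that $S$ is finite on a compact $M$: a discrete set can accumulate at a point of $M\setminus S$. For the ``compact $\Rightarrow$ finite'' clause you need $S$ to be closed, and in the accumulation argument you correctly observe that the limit $q$ is forced to be a critical point of $f$ (by continuity of $\nabla f$) but need not be non-degenerate. Indeed one can build a smooth $f$ on a compact manifold with infinitely many non-degenerate critical points: on $S^1$, modify $x\mapsto e^{-1/x^2}\sin(1/x)$ near a point; its critical points $x_n\approx 1/(n\pi)$ are generically non-degenerate and accumulate at the degenerate critical point $0$. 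So the compactness clause of (b), read literally for arbitrary smooth $f$, is false; it becomes correct precisely under the Morse hypothesis, because then the accumulation point $q$ is itself non-degenerate and (a) supplies the contradiction — which is exactly the route you take for (c). In short: (a) and (c) are proved, (b)'s local discreteness is proved, and the compactness clause of (b) should be read as implicitly assuming $f$ Morse (or that $S$ is closed), as you suspected.
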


\begin{thm}
    Let $f: M\to \bR$ be smooth and $p \in M$ be non-degenerate with index $\lambda$. Let $c = f(p)$. Suppose that for some $\varepsilon_0 > 0$, $f^{-1} [c-\vep_0, c+\vep_0]$ is compact and contains no other critical points of $f$, then for sufficiently small $\vep < \vep_0$, $f^{-1}(-\infty, c+\vep]$ has the homotopy type of $f^{-1}(-\infty, c-\vep]$ with a $\lambda$-cell attached \cite{Milnor}.  
\end{thm}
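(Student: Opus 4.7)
The plan is to mimic the classical Milnor-style handle attachment argument. I would begin by invoking the Morse Lemma to produce coordinates $(y_1, \ldots, y_n)$ on a neighborhood $U$ of $p$ in which
\[
f = c - \xi + \eta, \qquad \xi := \sum_{i=1}^\lambda y_i^2, \quad \eta := \sum_{i=\lambda+1}^n y_i^2,
\]
then pick $\vep < \vep_0$ small enough that $\{\xi + 2\eta \leq 2\vep\} \subset U$. The candidate $\lambda$-cell to be attached is $e^\lambda := \{\eta = 0,\ \xi \leq \vep\}$; its boundary sphere sits exactly on the level $\{f = c - \vep\}$, while its interior lies in $f^{-1}(c-\vep, c]$.

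The next step is to modify $f$ using a smooth bump $\mu: [0,\infty) \to [0,\infty)$ with $\mu(0) > \vep$, $\mu \equiv 0$ on $[2\vep, \infty)$, and $-1 < \mu' \leq 0$. Setting $F := f - \mu(\xi + 2\eta)$ on $U$ and $F := f$ elsewhere, I would verify that $F$ is smooth with $F \leq f$; that the critical points of $F$ in $U$ coincide with those of $f$ (only $p$), with $F(p) = c - \mu(0) < c - \vep$; and that $F^{-1}(-\infty, c+\vep] = f^{-1}(-\infty, c+\vep]$, since on $\{\xi+2\eta \leq 2\vep\}$ we already have $f = c - \xi + \eta \leq c + \eta \leq c + \vep$. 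Hence $F^{-1}[c-\vep, c+\vep]$ is a compact critical-point-free slab, and flowing along a truncated $-\nabla F / \|\nabla F\|^2$ yields a strong deformation retraction of $f^{-1}(-\infty, c+\vep]$ onto $F^{-1}(-\infty, c-\vep]$.

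Finally I would write $F^{-1}(-\infty, c-\vep] = f^{-1}(-\infty, c-\vep] \cup H$, where the \emph{handle} $H$ is the closed set of points in $U$ with $F \leq c - \vep < f$, and build a deformation retraction of this union onto $f^{-1}(-\infty, c-\vep] \cup e^\lambda$ by a piecewise formula on three regimes of $(\xi, \eta)$: where $\xi \leq \vep$, collapse the $\eta$-coordinates to zero along rays onto $e^\lambda$; where $\vep \leq \xi$ and $\xi - \vep \leq \eta$, slide along fibres of constant $\eta$-direction down to the attaching sphere $\{\eta = 0, \xi = \vep\}$; where $\xi - \vep \geq \eta$, the point already belongs to $f^{-1}(-\infty, c-\vep]$ and is kept fixed. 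The main obstacle is precisely this last construction: the three pieces must glue continuously across the interfaces $\xi = \vep$ and $\xi - \vep = \eta$, and the map must restrict to the identity on the overlap with $f^{-1}(-\infty, c-\vep]$; this requires a careful interpolation keyed to the specific shape of $\mu$, whereas the first two stages are standard flow-box and bump-function computations.
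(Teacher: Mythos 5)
The paper offers no proof of this theorem; it is quoted directly from Milnor's \emph{Morse Theory} (Theorem~3.2 there), and your proposal faithfully reconstructs Milnor's argument: produce Morse coordinates so that $f = c - \xi + \eta$, perturb to $F = f - \mu(\xi + 2\eta)$ with a suitable bump $\mu$, verify $F^{-1}(-\infty,c+\vep] = f^{-1}(-\infty,c+\vep]$ and that $F^{-1}[c-\vep, c+\vep]$ is a compact, critical-point-free band, flow down to $F^{-1}(-\infty,c-\vep]$, and finally retract the handle region $H = \overline{F^{-1}(-\infty,c-\vep]\setminus f^{-1}(-\infty,c-\vep]}$ onto the cell $e^\lambda$.

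The one place the outline misstates the construction is the target of the middle regime. On the region where $\vep \le \xi$ and $\eta \ge \xi - \vep$, the retraction should scale the normal coordinates $(y_{\lambda+1},\ldots,y_n)$ so that $\eta$ decreases to $\xi - \vep$, landing on the hypersurface $\{\eta = \xi - \vep\} \subset f^{-1}(c-\vep)$ --- not on the attaching sphere $\{\xi = \vep,\ \eta = 0\}$. If all of that regime were collapsed to that $(\lambda-1)$-sphere, the map would fail to restrict to the identity at the interface $\eta = \xi - \vep$ with regime three, and the result would not be a deformation retraction onto $f^{-1}(-\infty,c-\vep]\cup e^\lambda$. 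The attaching sphere is only the image of the slice $\xi = \vep$, which is what makes the glue with regime one consistent. Milnor's explicit scaling factor $s_t = t + (1-t)\sqrt{(\xi - \vep)/\eta}$ does exactly this; and note that, once the handle $H$ has been identified, the retraction formula lives entirely in the fixed Morse coordinates --- it does not require the interpolation ``keyed to the specific shape of $\mu$'' that you anticipate.
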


The next two theories discuss how the geometry of level sets of $f$ changes after passing a non-degenerate critical point, at least locally. 

\begin{thm}[Morse Theory] 
    Let $f: M\to \bR$ be a Morse function and for each $x \in \bR$, $f^{-1}(-\infty, x]$ is compact. Then $M$ has the homotopy type of a CW complex, with one cell of dimension $\lambda$ for each critical point of index $\lambda$ \cite{Milnor}. 
\end{thm}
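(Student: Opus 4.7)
The plan is to combine the previous theorem (which handles the passage through a single critical level by a cell attachment) with the standard Morse--theoretic fact that between regular values the sublevel sets are homotopy equivalent via a gradient--flow deformation retract. I will build $M$ up as an increasing union of sublevel sets, producing a CW model one critical value at a time.

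First I would order the critical values. The hypothesis that $f^{-1}(-\infty, x]$ is compact for every $x$ and the corollary above (non-degenerate critical points are isolated, so they are locally finite) together imply that the set of critical values is a sequence $c_1 < c_2 < \cdots$ with no finite accumulation point (otherwise a sublevel set would contain infinitely many isolated critical points inside a compact set, a contradiction). For each critical value $c_k$ there are only finitely many critical points $p_{k,1}, \dots, p_{k,m_k}$ of $f$ with $f(p_{k,i}) = c_k$, each of some index $\lambda_{k,i}$. I would then choose, for each $k$, an $\varepsilon_k > 0$ small enough so that $[c_k - \varepsilon_k, c_k + \varepsilon_k]$ contains no other critical value and so that $\varepsilon_k < \tfrac{1}{2}\min(c_k - c_{k-1}, c_{k+1} - c_k)$.

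Next I would carry out the induction on $k$. Pick $a_0 < c_1$; then $M_{a_0} := f^{-1}(-\infty, a_0]$ is empty, which is trivially a CW complex. Assuming $M_{c_k - \varepsilon_k}$ already has a CW model $X_k$ via a homotopy equivalence $h_k: X_k \to M_{c_k - \varepsilon_k}$, I would apply the previous theorem (in its version allowing several critical points on the same level, by localizing inside disjoint Morse charts around $p_{k,1}, \dots, p_{k,m_k}$) to conclude that $M_{c_k + \varepsilon_k}$ is homotopy equivalent to $M_{c_k - \varepsilon_k}$ with $\lambda_{k,i}$-cells attached for $i = 1, \dots, m_k$. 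Composing with $h_k$ and using the homotopy extension property of CW pairs, I obtain a CW complex $X_{k+1}$ and a homotopy equivalence $X_{k+1} \to M_{c_k + \varepsilon_k}$. Then, between $c_k + \varepsilon_k$ and $c_{k+1} - \varepsilon_{k+1}$ there are no critical values, so the standard deformation along $-\nabla f / \|\nabla f\|^2$ (which is well defined and integrable on the compact set $f^{-1}[c_k + \varepsilon_k, c_{k+1} - \varepsilon_{k+1}]$) gives a strong deformation retract of $M_{c_{k+1} - \varepsilon_{k+1}}$ onto $M_{c_k + \varepsilon_k}$, and hence onto $X_{k+1}$.

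Finally I would assemble the pieces. In the compact case there are only finitely many $c_k$ and $M = M_{c_N + \varepsilon_N}$ for large $N$, so the construction terminates and yields the desired CW model. In the non-compact case one has an increasing sequence of CW complexes $X_1 \hookrightarrow X_2 \hookrightarrow \cdots$ compatible with the inclusions $M_{c_k - \varepsilon_k} \hookrightarrow M_{c_{k+1} - \varepsilon_{k+1}}$ up to homotopy; I would rigidify these homotopies via the CW homotopy extension property so as to obtain honest inclusions of CW subcomplexes, and then take $X := \bigcup_k X_k$ with the weak topology. The map $X \to M$ is a weak homotopy equivalence because every compact subset of $M$ lies in some $M_{c_k + \varepsilon_k}$, and by Whitehead's theorem this suffices. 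The main obstacle I expect is this last step: promoting the level-by-level homotopy equivalences to a single coherent homotopy equivalence in the direct limit. This is essentially the same technical issue as gluing Morse data across coincident critical values within one level, and in both places the key tool is the homotopy extension property together with a careful choice of attaching maps inside disjoint Morse charts.
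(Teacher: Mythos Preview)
The paper does not supply its own proof of this theorem; it simply quotes the statement and cites Milnor. So there is nothing in the paper to compare your argument against beyond the reference itself.

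That said, your outline is precisely the classical Milnor argument: order the critical values using compactness of sublevel sets and local finiteness of critical points, use the regular-interval deformation lemma between consecutive critical values, invoke the cell-attachment theorem at each critical level, and glue via the homotopy extension property, finishing with a direct-limit/Whitehead step in the noncompact case. Your identification of the two delicate points (several critical points on one level, and coherence of the homotopy equivalences in the limit) matches exactly where Milnor spends the extra work (his Theorem~3.5 and the surrounding lemmas). One small correction: the critical values need not form an infinite increasing sequence---they may be finite in number or bounded above---so phrase the induction over the (possibly finite) ordered set of critical values rather than assuming a sequence $c_1<c_2<\cdots$ a priori.
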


For Morse functions, the geometry of stable and unstable manifolds of a critical point of $f$ (with respect to GF) is nice. 

\begin{thm}[Stable Manifold Theorem] 
    Let $M$ be a manifold of dimension $n$ and $f: M\to \bR$ be a Morse function. For any critical point $p$ of $M$ with index $\lambda$, $W^s(p)$ and $W^u(p)$ are smooth submanifolds diffeomorphic to $(n-\lambda)$-dimensional and $\lambda$-dimensional open balls, respectively \cite{Cohen}. 
\end{thm}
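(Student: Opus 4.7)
The plan is to reduce the problem to the classical local stable/unstable manifold theorem for a hyperbolic equilibrium, and then use the gradient flow itself to push the local picture out to the global stable and unstable manifolds. First I would check that $p$ is hyperbolic for the vector field $X := -\nabla f$. Since $f$ is Morse, $\text{Hess}\, f(p)$ is a non-degenerate symmetric bilinear form on $T_p M$ of index $\lambda$; using the Riemannian metric $g$ to identify it with a $g$-self-adjoint endomorphism, its eigenvalues are real and non-zero, with exactly $\lambda$ negative and $n-\lambda$ positive. Thus $D_p X = -\text{Hess}\, f(p)$ has $\lambda$ positive and $n-\lambda$ negative eigenvalues, producing a splitting $T_p M = E^u \oplus E^s$ with $\dim E^u = \lambda$ and $\dim E^s = n-\lambda$, so $p$ is a hyperbolic zero of $X$.

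Second, I would invoke the Hadamard--Perron local stable manifold theorem for the smooth vector field $X$ at $p$. This produces a neighborhood $U \ni p$ and smooth embedded submanifolds $W^s_{\text{loc}}(p), W^u_{\text{loc}}(p) \subset U$ tangent at $p$ to $E^s$ and $E^u$, of dimensions $n-\lambda$ and $\lambda$, each realized as the graph of a smooth function over a ball in the corresponding invariant subspace and hence diffeomorphic to an open ball of the stated dimension. They are characterized dynamically by $W^s_{\text{loc}}(p) = \{x \in U : \gamma_x(t) \in U \text{ for all } t\ge 0,\ \gamma_x(t) \to p\}$, with the symmetric description for $W^u_{\text{loc}}(p)$. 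To globalize, observe that for any $x \in W^s(p)$ the orbit $\gamma_x(t)$ converges to $p$ and therefore eventually lies in $W^s_{\text{loc}}(p)$; hence $W^s(p) = \bigcup_{T \geq 0} \phi_{-T}\bigl(W^s_{\text{loc}}(p)\bigr)$, where $\phi_t$ denotes the flow of $X$. This is a nested increasing union of open copies of $W^s_{\text{loc}}(p)$ under diffeomorphisms of $M$, and the strict decrease of $f$ along non-constant orbits prevents any self-accumulation, so $W^s(p)$ inherits the structure of a smoothly embedded submanifold. To identify its diffeomorphism type with an open ball, I would fix a small exit sphere $S \subset W^s_{\text{loc}}(p)$ transverse to the flow, so that each non-$p$ point of $W^s(p)$ crosses $S$ exactly once under $\phi_t$; the resulting diffeomorphism $W^s(p) \setminus \{p\} \cong S \times \bR$ identifies $W^s(p)$ with the open cone on $S^{n-\lambda-1}$, hence with an open ball of dimension $n-\lambda$. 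The argument for $W^u(p)$ is symmetric after replacing $X$ by $-X$.

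The main obstacle is the Hadamard--Perron theorem itself, which I would treat as a black box since its proof via graph-transform contraction arguments is technical and essentially independent of the Morse-theoretic content here. A secondary subtlety is justifying the exit-sphere construction, which requires transversality of $S$ to $X$ and completeness of the flow-out chart; both follow from the strict decrease of $f$ along orbits together with the local hyperbolic picture. Alternatively, one could invoke the Morse lemma to obtain explicit quadratic coordinates in which the invariant subspaces are coordinate planes, but this would still require reconciling the Morse chart with the (possibly non-Euclidean) Riemannian metric $g$ defining $\nabla f$, so the Hadamard--Perron route is cleaner.
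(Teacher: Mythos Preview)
The paper does not actually prove this theorem: it is stated with a citation to \cite{Cohen} and no argument is given, consistent with the author's remark that only results \emph{with proofs} are original contributions. So there is no ``paper's own proof'' to compare against.

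That said, your outline is the standard route one finds in sources such as Cohen's notes or Audin--Damian: verify hyperbolicity via the self-adjointness of $\text{Hess}\,f(p)$ with respect to $g$, invoke Hadamard--Perron for the local stable and unstable manifolds, and then globalize by flowing the local picture outward. Your exit-sphere argument for the ball diffeomorphism type is also the usual one. One point you should be more careful about is completeness of the backward flow along $W^s(p)$: the statement as written does not assume $M$ compact, so the map $S \times \bR \to W^s(p)\setminus\{p\}$ via $\phi_{-t}$ is only defined on an open star-shaped region $\{(s,t): t < T(s)\}$ for some upper-semicontinuous $T:S\to(0,\infty]$. You then need to argue that this region is itself diffeomorphic to $S\times\bR$, which is true but deserves a sentence. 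Otherwise the plan is sound.
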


The next three corollaries state properties of GF of a Morse function $f$. The first one exploits the hyperbolicity of each critical point of a Morse function. The second one states that almost every GF of a Morse function on a compact manifold converges to a local minimum. The next one characterizes the behavior of $f$ when restricted to the stable and unstable manifolds of a critical point. The last one states that the lengths of all such GFs are bounded uniformly by a constant. 

\begin{cor}\label{Cor: enter once}
    Let $f: M \to \bR$ be a Morse function. For any critical point $p$ of $f$, there is an open $U \siq M$ such that any GF of $f$ does not enter $U$ twice \cite{Cohen}. 
\end{cor}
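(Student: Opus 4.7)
The strategy exploits that $p$ is a hyperbolic fixed point of $-\nabla f$. In local coordinates, the linearization of $-\nabla f$ at $p$ is $L = -g(p)^{-1}\,\text{Hess}\,f(p)$, which is conjugate (via $g(p)^{1/2}$) to the symmetric matrix $-g(p)^{-1/2}\,\text{Hess}\,f(p)\,g(p)^{-1/2}$. Non-degeneracy of the Hessian forces this symmetric matrix to have only non-zero real eigenvalues, and Sylvester's law of inertia counts $\lambda$ positive and $n-\lambda$ negative among them. Hence $L$ is hyperbolic, and by the Hartman--Grobman theorem there is a neighborhood $V$ of $p$ together with a homeomorphism $\Phi: V \to \tilde{V} \subseteq \bR^n$ with $\Phi(p) = 0$ that conjugates the gradient flow on $V$ to its linearization.

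After a further linear change of variables diagonalizing $L$ to $\mathrm{diag}(\alpha_1, \dots, \alpha_n)$ with $\alpha_i \neq 0$, the linearized flow becomes $x_i(t) = e^{\alpha_i t} x_i(0)$, so each $|x_i(t)|$ is strictly monotonic in $t$. For a coordinate box $B = \{x : |x_i| < \delta \text{ for all } i\} \subseteq \tilde{V}$, the set $\{t : x(t) \in B\} = \bigcap_i \{t : |e^{\alpha_i t} x_i(0)| < \delta\}$ is a finite intersection of intervals, hence an interval. My candidate neighborhood is $U := \Phi^{-1}(B) \cap \{x : |f(x) - f(p)| < \varepsilon\}$ for small $\delta, \varepsilon > 0$. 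Suppose toward contradiction that a gradient flow line $\gamma$ satisfies $\gamma(t_1), \gamma(t_3) \in U$ with some $\gamma(t_2) \notin U$ for $t_1 < t_2 < t_3$. Since $f$ strictly decreases along $\gamma$, one has $f(\gamma(t)) \in (f(p) - \varepsilon, f(p) + \varepsilon)$ throughout $[t_1, t_3]$, so exit from $U$ must occur via $\gamma(t_2) \notin \Phi^{-1}(B)$. If $\gamma([t_1, t_3]) \subseteq V$, the Hartman--Grobman conjugacy transfers this to the linearized flow leaving $B$ and re-entering, contradicting the interval property.

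The main obstacle is the remaining case where $\gamma$ exits $V$ entirely between $t_1$ and $t_3$ before returning. I would handle this by fixing $\delta$ (and hence $V$) first, then choosing $\varepsilon$ sufficiently small. On the compact annulus $\overline{V} \setminus \Phi^{-1}(B)$ the gradient $\|\nabla f\|$ is bounded below by some $c > 0$ (it cannot vanish there, since $p \in \Phi^{-1}(B)$ is the only critical point in a small neighborhood by part (a) of the preceding corollary). Let $D = \operatorname{dist}(\partial \Phi^{-1}(B), \partial V) > 0$. Any trajectory crossing from $\Phi^{-1}(B)$ out to $\partial V$ has arc length $\geq D$ inside that annulus, and since $\int \|\nabla f\|^2 \, dt \geq c \int \|\nabla f\| \, dt = c \cdot (\text{length})$, the value of $f$ drops by at least $cD$ during the crossing. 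Running this estimate on both the outgoing and the incoming legs shows the total $f$-drop from $t_1$ to $t_3$ is at least $2cD$; but $f(\gamma(t_1)) - f(\gamma(t_3)) < 2\varepsilon$ by construction of $U$, so choosing $\varepsilon < cD$ rules out this case and completes the argument.
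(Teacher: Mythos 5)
The paper cites this corollary to \cite{Cohen} without supplying a proof of its own, so there is no in-text argument to compare against; the surrounding text only hints that the result ``exploits the hyperbolicity of each critical point of a Morse function,'' which is precisely what you do. Your argument is correct. The linearization $L=-g(p)^{-1}\text{Hess}\,f(p)$ is similar to a symmetric invertible matrix, hence hyperbolic and diagonalizable over $\bR$, so after applying Hartman--Grobman and a linear change of coordinates the sojourn set of a linear trajectory in a box $B$ is $\bigcap_i\{t:|e^{\alpha_i t}x_i(0)|<\delta\}$, a finite intersection of intervals and therefore an interval; this rules out re-entry for gradient trajectories that stay inside $V$ throughout $[t_1,t_3]$. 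Your treatment of the remaining case (the trajectory escaping $V$) is the essential extra step and it works: on the compact shell $\overline{V}\setminus\Phi^{-1}(B)$ the gradient is bounded below by $c>0$, each crossing has arc length at least $D$, and each crossing forces an $f$-drop of at least $cD$ via $\int\|\nabla f\|^2\,dt\ge c\int\|\nabla f\|\,dt$; meanwhile the $\varepsilon$-band constraint on $f$ caps the total drop between $t_1$ and $t_3$ at $2\varepsilon$, so $\varepsilon<cD$ kills this case. Two small points worth making explicit: $\delta$ should be chosen so that $\overline{B}\subseteq\tilde{V}$, which is what makes $D>0$; and the Hartman--Grobman conjugacy carries trajectory segments to trajectory segments with the same time parametrization, which is what justifies transferring the interval property from the linear flow back to $\gamma$.
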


Then we have the following short result on convergence of gradient flow of $f$ on a compact manifold. 

\begin{cor}
    Let $M$ be a compact manifold and $f: M \to \bR$ be a Morse function. Then for almost all $p \in M$, the gradient flow $\gamma_p$ with initial value $p$ converges to a local minimum of $f$ as $t \to \infty$. 
\end{cor}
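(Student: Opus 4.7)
The plan is to prove the corollary in three steps: first, show that every gradient trajectory of $f$ converges to some critical point; second, decompose $M$ as a disjoint union of the stable manifolds of the finitely many critical points; third, observe that the stable manifolds of non-minima are measure zero.

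For the first step, along any trajectory,
\begin{equation*}
\frac{d}{dt}\bigl(f \circ \gamma_p\bigr)(t) \;=\; -\|\nabla f(\gamma_p(t))\|_g^2 \;\leq\; 0,
\end{equation*}
so $f\circ\gamma_p$ is non-increasing; compactness of $M$ makes $f$ bounded, so the limit exists. The $\omega$-limit set $\omega(p)$ is then a non-empty compact subset of $M$ on which $f$ equals this limiting value, and a standard argument (the flow would otherwise decrease $f$ strictly in a neighborhood of any non-critical accumulation point) shows every point of $\omega(p)$ is a critical point of $f$. Since $M$ is compact and $f$ is Morse, Corollary 3.1(c) implies $f$ has only finitely many critical points, hence $\omega(p)$ is finite. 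To upgrade finiteness to a single point, I would invoke Corollary~\ref{Cor: enter once}: if $\omega(p)$ contained two distinct critical points $q, q'$, choose disjoint neighborhoods $U, U'$ of the form provided by that corollary; then $\gamma_p$ would have to enter $U$ infinitely often while also visiting $U'$ between consecutive entries, contradicting the ``enter only once'' property of $U$.

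Having established convergence, we obtain $M = \bigsqcup_{i=1}^k W^s(p_i)$, where $p_1, \ldots, p_k$ are the critical points of $f$. By the Stable Manifold Theorem, each $W^s(p_i)$ is a smooth embedded submanifold of $M$ diffeomorphic to an open ball of dimension $n-\lambda_i$. When $p_i$ is not a local minimum, $\lambda_i \geq 1$, so $\dim W^s(p_i) \leq n-1$, and $W^s(p_i)$ has measure zero in $M$ (with respect to the Riemannian volume, or equivalently any smooth volume form). The finite union of these measure-zero sets is still measure zero, and its complement is exactly the set of $p \in M$ for which $\gamma_p$ converges to a local minimum.

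The main obstacle is the first step, specifically the passage from ``$\omega(p)$ is a finite set of critical points'' to ``$\omega(p)$ is a single point.'' One could appeal to the general fact that $\omega$-limit sets of continuous flows on compact spaces are connected, but a cleaner, self-contained argument uses Corollary~\ref{Cor: enter once} together with continuity of $\gamma_p$, as sketched above; the rest of the proof is a direct application of the Stable Manifold Theorem and the observation that submanifolds of strictly smaller dimension carry no volume.
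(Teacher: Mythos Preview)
Your proposal is correct and follows essentially the same route as the paper: both use compactness plus the Morse condition to get finitely many critical points, invoke Corollary~\ref{Cor: enter once} to rule out multiple accumulation points and thereby force convergence of every trajectory to a single critical point, and then finish by noting that the stable manifolds of saddles and maxima have positive codimension (via the Stable Manifold Theorem) and hence measure zero. Your write-up is somewhat more explicit about why the $\omega$-limit set consists only of critical points and why lower-dimensional submanifolds carry no volume, but the structure and key lemmas are the same as in the paper.
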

\begin{proof}
    Since $M$ is compact, $f$ has only finitely many critical points, say $p_1, ..., p_k \in M$. For simplicity, assume that the saddles and local maximums are $p_1, ..., p_l$, and the local minimums are $p_{l+1}, ..., p_k$. Let $U_1, ..., U_k$ be the open balls around $p_1, ..., p_k$, respectively, each of them satisfying Corollary \ref{Cor: enter once}. 

    For any GF $\gamma$, we claim that $\gamma$ converges to some $p_j$. Indeed, $\gamma$ has an accumulation point because its image is contained in the compact $M$. It cannot have two accumulation points, since otherwise $\gamma$ will enter some $U_j$ more than once, contradicting Corollary \ref{Cor: enter once}. This means $\limftyt \gamma(t) =: p$ exists and $\nabla f(p) = \limftyt \nabla f(\gamma(t)) = 0$. Thus, $p = p_j$ for some $j \in \{1, ..., k\}$, proving our claim. 

    Now if $\limftyt \gamma(t) \in \{p_1, ..., p_l\}$, $\gamma(0) \in \cup_{j=1}^l W^s(p_j)$, which is a measure-zero subset of $M$. For any $x_0 \notin \cup_{j=1}^l W^s(p_j)$, $\limftyt \gamma(t) \in \{p_{l+1}, ..., p_k \}$, as desired. 
\end{proof}

\begin{cor}
    Let $f: M\to \bR$ be a Morse function and $p$ be a critical point of $f$. Then $p$ is the unique minimum of $f|_{W^s(p)}: W^s(p) \to \bR$, the restriction of $f$ to $W^s(p)$; similarly, $p$ is the unique maximum of $f|_{W^u(p)}: W^u(p) \to \bR$, the restriction of $f$ to $W^u(p)$ \cite{Cohen}. 
\end{cor}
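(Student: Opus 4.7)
The plan is to exploit the fact that $f$ is monotone along its own gradient flow, and that strict monotonicity fails only on the critical set, which by the Morse hypothesis consists of isolated points.

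First I would compute the derivative of $f$ along a flow line. For any $x \in M$ and the GF $\gamma_x$ satisfying (\ref{GF defn}),
\begin{equation*}
    \frac{d}{dt} f(\gamma_x(t)) = d_{\gamma_x(t)} f\!\left( \frac{d\gamma_x}{dt} \right) = \langle \nabla f(\gamma_x(t)), -\nabla f(\gamma_x(t)) \rangle = -\| \nabla f(\gamma_x(t)) \|^2 \leq 0.
\end{equation*}
So $t \mapsto f(\gamma_x(t))$ is non-increasing, and is strictly decreasing on any interval on which $\nabla f(\gamma_x(t))$ is not identically zero.

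Next I would fix $x \in W^s(p)$. By definition $\lim_{t \to \infty} \gamma_x(t) = p$, and by continuity of $f$ we get $\lim_{t\to\infty} f(\gamma_x(t)) = f(p)$. Combined with monotonicity this yields $f(x) = f(\gamma_x(0)) \geq f(p)$, so $p$ is a minimum of $f|_{W^s(p)}$. For uniqueness, suppose $f(x) = f(p)$ for some $x \in W^s(p)$. Monotonicity on $[0,\infty)$ then forces $f(\gamma_x(t)) \equiv f(p)$, hence $\nabla f(\gamma_x(t)) \equiv 0$ along the flow. But then $\gamma_x$ is constant, so $\gamma_x(t) \equiv x$, and $x$ is itself a critical point of $f$. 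Since $x \in W^s(p)$ and $\gamma_x \equiv x$, taking $t \to \infty$ gives $x = p$.

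For the unstable manifold the argument is symmetric: along $\gamma_x$ with $x \in W^u(p)$, running time backward makes $f$ non-decreasing in the backward direction, so $f(x) \leq \lim_{t \to -\infty} f(\gamma_x(t)) = f(p)$, and equality again forces $x$ to be a stationary critical point which must equal $p$.

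The only place where care is needed is the uniqueness step, i.e.\ ruling out a second point $x \neq p$ in $W^s(p)$ with $f(x) = f(p)$. This is where we use the Morse hypothesis only indirectly: we do not need non-degeneracy of $p$ per se, but we do need that the flow line with $f$-value constantly equal to $f(p)$ collapses to a single critical point, and the identification of that critical point with $p$ comes for free from membership in $W^s(p)$. Thus the only genuine ingredient beyond basic ODE facts is the monotonicity identity $\frac{d}{dt}(f\circ \gamma_x) = -\|\nabla f\|^2$.
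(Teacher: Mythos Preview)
Your argument is correct and is the standard one: monotonicity of $f$ along gradient trajectories gives the inequality, and equality forces the trajectory to be stationary, which together with $x\in W^s(p)$ pins down $x=p$.

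Note, however, that the paper does not actually supply a proof of this corollary; it is simply quoted from \cite{Cohen}. So there is no ``paper's own proof'' to compare against. That said, the paper does use precisely your reasoning elsewhere: the remark immediately following the definition of a Morse--Smale function argues that $W(p,p)=\{p\}$ by sandwiching $f(x)$ between the two limits $f(p)$, concluding $f\circ\gamma_x$ is constant, hence $\nabla f(\gamma_x)\equiv 0$, hence $\gamma_x\equiv p$. Your proof is in exactly the same spirit, so it meshes seamlessly with the surrounding material.

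One minor comment on your closing paragraph: you are right that the Morse hypothesis is not really used. The argument goes through for any smooth $f$ once you know $W^s(p)$ is well defined (i.e.\ the limit exists), since the identification $x=p$ comes from $\gamma_x\equiv x$ and $\lim_{t\to\infty}\gamma_x(t)=p$, not from isolation of critical points.
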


\begin{defn}[Nice metric]
    Let $(M, g)$ be a Riemannian manifold and $f$ be a Morse function. A metric is said to be nice if there exist charts around each critical point of $f$ so that the GF represented in any one of the charts has the form
    \begin{equation}
        \dot{x}_i = c_i x_i, \quad 1 \leq i \leq n, 
    \end{equation}
    where $c_i \in \bR\cut\{0\}$ for each $1 \leq i \leq n$ \cite{Cohen}. 
\end{defn}

\begin{cor}
    Let $(M,g)$ be a compact Riemannian manifold and $f: M \to \bR$ be a Morse function. Suppose that $g$ is a nice metric. Then there is a $C > 0$ such that the length of any GF of $f$ is bounded (above) by $C$ \cite{Cohen}. 
\end{cor}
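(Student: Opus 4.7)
The plan is to decompose any gradient flow $\gamma: \mathbb{R} \to M$ (defined on all of $\mathbb{R}$ since $M$ is compact) into portions lying inside the nice-chart neighborhoods of the critical points and a portion lying outside, then bound each part separately. First I would enumerate the critical points as $p_1, \ldots, p_k$, which are finitely many by the earlier corollary, and pick a nice chart $(U_j, \varphi_j)$ around each $p_j$ with $\varphi_j(U_j)$ an open Euclidean ball of radius $r_j$; by shrinking $r_j$ if necessary, I can further assume each $U_j$ has the single-visit property of Corollary \ref{Cor: enter once}, so that $\{t: \gamma(t) \in U_j\}$ is a single (possibly unbounded) interval $I_j(\gamma)$. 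Setting $K := M \setminus \bigcup_{j=1}^k U_j$, compactness of $K$ together with the absence of critical points on $K$ gives $\delta := \inf_K |\nabla f| > 0$.

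For the portion of $\gamma$ in $K$, I would use the energy identity $\tfrac{d}{dt} f(\gamma(t)) = -|\nabla f(\gamma)|^2$ together with the inequality $|\nabla f| \le |\nabla f|^2/\delta$ on $K$ to estimate
\begin{equation*}
\int_{\{t:\gamma(t)\in K\}} |\dot\gamma(t)|\,dt \;=\; \int_{\{t:\gamma(t)\in K\}} |\nabla f(\gamma)|\,dt \;\le\; \frac{1}{\delta}\int_{-\infty}^{\infty} |\nabla f(\gamma)|^2\,dt \;\le\; \frac{\max_M f - \min_M f}{\delta},
\end{equation*}
which is uniform in $\gamma$.

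For the portion of $\gamma$ inside each $U_j$, I would pass to the nice chart, where the trajectory is $x(t)=(a_1 e^{c_1 t},\ldots,a_n e^{c_n t})$ and each coordinate $x_i$ is either identically zero or strictly monotonic of constant sign. On the visit interval $I_j(\gamma)$ one has $|x_i(t)| \le r_j$ throughout, so monotonicity telescopes the arclength integral to $\int_{I_j(\gamma)} |\dot x_i(t)|\,dt = |x_i(\mathrm{end}) - x_i(\mathrm{start})| \le r_j$, where the endpoint values are interpreted as limits (which are $0$) if $I_j(\gamma)$ is unbounded. Summing via $|\dot x| \le \sum_i |\dot x_i|$ bounds the Euclidean chart arclength of $\gamma \cap U_j$ by $n r_j$; since $\overline{U_j}$ is compact and $g$ is smooth, the chart Euclidean metric and $g$ are bi-Lipschitz equivalent on $\overline{U_j}$, so the $g$-length of $\gamma \cap U_j$ is at most $K_j n r_j$ for a constant $K_j$ depending only on $U_j$ and $g$. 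Adding the $k$ such bounds to the $K$-bound yields the required uniform $C$.

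The hard part will be that $\gamma$ may remain inside some $U_j$ for arbitrarily long times (e.g.\ when $\gamma$ converges to a local minimum lying in $U_j$), so any bound that grows with elapsed time inside $U_j$ would fail. The nice-metric hypothesis is exactly what circumvents this difficulty: the monotonicity of each coordinate in the nice chart reduces the arclength integral to a difference of endpoint values, so the length inside $U_j$ is controlled purely by the diameter of $U_j$ and not by the time the flow spends there.
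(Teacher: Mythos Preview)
The paper does not supply its own proof of this corollary; it is simply attributed to \cite{Cohen}. So there is no in-paper argument to compare against, and your proposal should be judged on its own merits.

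Your argument is sound. The decomposition into the compact set $K=M\setminus\bigcup_jU_j$ and the nice-chart balls $U_j$, the energy estimate
\[
\int_{\{\gamma\in K\}}|\dot\gamma|_g\,dt\le\frac{1}{\delta}\int_{\mathbb R}|\nabla f(\gamma)|_g^2\,dt\le\frac{\max_Mf-\min_Mf}{\delta},
\]
and the coordinatewise monotonicity bound inside each $U_j$ (turning $\int_{I_j}|\dot x_i|\,dt$ into an endpoint difference bounded by $r_j$) are exactly the natural way to exploit the nice-metric hypothesis, and each step goes through as written. Your closing remark correctly identifies why the nice-metric assumption is essential.

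One point deserves a sentence of justification. You write that ``by shrinking $r_j$ if necessary'' the ball $U_j$ inherits the single-visit property of Corollary~\ref{Cor: enter once}, but as that corollary is stated in the paper it only produces \emph{some} neighbourhood $V_j$ with the property, and passing to a smaller open set inside $V_j$ does not automatically preserve it. Two easy fixes are available: either cite the standard stronger form of the hyperbolic ``enter once'' statement (every sufficiently small neighbourhood of a hyperbolic fixed point has the single-visit property), which is what is actually proved in \cite{Cohen}; or observe directly that along a linear trajectory $x_i(t)=a_ie^{c_it}$ the map $t\mapsto|x(t)|^2=\sum_ia_i^2e^{2c_it}$ is strictly convex, so its sublevel sets in $t$ are intervals, and combine this with single-visit for the full chart domain to rule out a second entry into $U_j$. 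Either way this is a one-line addition and does not affect the structure of your proof.
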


\section{GF of Morse-Smale and Morse--Bott Functions}\label{section 4} 

In this section, we first focus on Morse-Smale functions, which is a special kind of Morse function whose stable and unstable manifolds intersect transversely. This transversality makes GF of Morse-Smale functions nice. Then we focus on Morse--Bott functions, which is a generalization of Morse function. Many properties of Morse function can be generalized to Morse--Bott functions. 

\begin{defn}[Morse-Smale function]
    Let $f: M \to \bR$ be smooth. $f$ is called a Morse-Smale function if it is a Morse function and for any critical points $p, q$ of it, $W^u(p)$ and $W^s(q)$ intersect transversely. In this case, denote $W(p,q) := W^u(p) \cap W^s(q)$. 
\end{defn} 
\textbf{Remark. } For any critical point $p \in M$, $W(p,p) = \{p\}$. Indeed, for any $x \in W(p,p)$, we would have 
\begin{equation*}
    f(p) = \lim_{t \to \infty} f(\gamma_x(t)) \leq f(x) \leq \lim_{t \to -\infty} f(\gamma_x(t)) = f(p). 
\end{equation*} 
Since $\gamma_x$ is a GF, $f(\gamma_x(\cdot))$ is monotonic and thus it is constant. But this means $\nabla f(\gamma_x(t)) = 0$ and $\gamma_x (t) = p$ for all $t$. In general, given critical points $p, q$ of $f$, since $W^u(p)$ and $W^s(p)$ intersects transversely, $W(p,q)$ is a smooth manifold with
\begin{equation*}
    \dim W(p,q) = \dim(W^u(p)) + \dim(W^s(q)) - n
\end{equation*}

\begin{defn}
    Given a Morse-Smale function $f: M\to \bR$ and critical points $p,q$ of it. For $t \in \bR$ between $f(p)$ and $f(q)$, define $W^t(p,q) := W(p,q) \cap f^{-1}(t)$. 
\end{defn}

The transversality condition implies that each $W(p,q)$ looks like a ``tube” $W^t(p,q) \times \bR$. 

\begin{thm}\label{W^t(p,q) of Morse-Smale}
    If the critical points $p,q$ satisfy $f(q) < f(p)$, for any $t \in (f(q), f(p))$, $W^t(p,q)$ is a submanifold of $W(p, q)$ with codimension 1. Moreover, the ``flow map" 
    \begin{equation*}
        \phi: W^t(p,q) \times \bR \to W(p,q), \phi (x, s) = \gamma_x(s) 
    \end{equation*} 
    is a diffeomorphism \cite{Cohen}. 
\end{thm}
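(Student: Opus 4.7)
The theorem has two assertions, which I would address in turn.

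For the first assertion, my approach is to show that $t$ is a regular value of the restriction $f|_{W(p,q)}$, so that $W^t(p,q)$ is automatically a codimension-$1$ submanifold. The essential observation is that the gradient vector field $\nabla f$ is tangent to $W(p,q)$ at each of its points: both $W^u(p)$ and $W^s(q)$ are invariant under the gradient flow, so $-\nabla f$ is tangent to each, and hence also to their transverse intersection $W(p,q)$. Moreover, $W(p,q)$ contains no critical points of $f$ whenever $f(p) \neq f(q)$, by the same monotonicity argument used in the remark after the Morse--Smale definition (a critical fixed point $x \in W(p,q)$ would force $p = x = q$). Consequently,
\[
d_x\bigl(f|_{W(p,q)}\bigr)\bigl(\nabla f(x)\bigr) = |\nabla f(x)|^2 > 0
\]
for every $x \in W(p,q)$, which shows that every value in $(f(q), f(p))$ is a regular value of $f|_{W(p,q)}$.

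For the second assertion, smoothness of $\phi$ is immediate from smooth dependence of ODE solutions on initial data, so I would first verify bijectivity and then check that the differential is an isomorphism. For injectivity, if $\phi(x_1, s_1) = \phi(x_2, s_2)$, uniqueness of solutions to \eqref{GF defn} forces $x_1$ and $x_2$ to lie on a common orbit; since $f$ is strictly decreasing along orbits in $W(p,q)$ by the previous paragraph, the orbit meets $f^{-1}(t)$ at most once, giving $x_1 = x_2$ and then $s_1 = s_2$. For surjectivity, given $y \in W(p,q)$, the map $s \mapsto f(\gamma_y(-s))$ is smooth with positive derivative $|\nabla f|^2$ and limits $f(q)$ and $f(p)$ at $s = -\infty$ and $s = +\infty$, respectively; the intermediate value theorem yields a unique $s_0$ with $f(\gamma_y(-s_0)) = t$, and then $(\gamma_y(-s_0), s_0)$ maps to $y$ under $\phi$, with $\gamma_y(-s_0) \in W^t(p,q)$ by flow-invariance of $W(p,q)$.

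To upgrade this smooth bijection to a diffeomorphism, I would compute $d\phi_{(x,s)}$. The $s$-partial is $-\nabla f(\phi(x,s))$, while the $x$-partial is $d\gamma_s|_{T_x W^t(p,q)}$, where $\gamma_s$ denotes the time-$s$ flow map. Since the flow of a vector field pushes that vector field forward to itself, $d\gamma_s(-\nabla f(x)) = -\nabla f(\phi(x,s))$; because the hyperplane $T_x W^t(p,q) \subset T_x W(p,q)$ is transverse to $\nabla f(x)$ (by the regular-value property established above), its image $d\gamma_s(T_x W^t(p,q))$ is a hyperplane in $T_{\phi(x,s)} W(p,q)$ transverse to $\nabla f(\phi(x,s))$. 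Hence $d\phi_{(x,s)}$ surjects onto $T_{\phi(x,s)} W(p,q)$ and is an isomorphism by dimension count, so $\phi$ is a diffeomorphism. I anticipate the main conceptual obstacle to be justifying the tangency of $\nabla f$ to $W(p,q)$ (which is an intersection of submanifolds rather than a level set); once this tangency is established, both the submanifold structure of $W^t(p,q)$ and the flow-box description of $W(p,q)$ follow by standard regular-value and flow-box arguments.
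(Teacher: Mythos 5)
Your proof is correct and follows essentially the same approach as the paper's proof of the Morse--Bott generalization (Theorem 4.3; the paper cites this Morse--Smale version to \cite{Cohen} without reproducing a proof): regular-value argument for the codimension-one claim, uniqueness and monotonicity for bijectivity, smooth dependence for smoothness, and a pushforward/transversality computation for the differential. Your treatment of the codimension-one part is, if anything, slightly more careful than the paper's, since you explicitly justify that $\nabla f$ is tangent to $W(p,q)$ via flow-invariance of $W^u(p)$ and $W^s(q)$ before invoking the regular-value theorem for $f|_{W(p,q)}$, a point the paper's analogous argument (which only observes $\nabla f(p)\neq 0$) leaves implicit.
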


\begin{defn}
    Let $f: M\to \bR$ be Morse-Smale and $a, b \in M$ be distinct critical points of it. Define 
    \begin{equation}
    \begin{aligned}
        P(a, b) &:= \{\eta \in C^1(\bR, M): \lim_{t\to-\infty} \eta(t) = a, \limftyt \eta(t) = b\}; \\
        F(a, b) &:= \{\eta \in P(a, b):  \dot{\eta}(t) = -\nabla f(\eta(t)) \}. 
    \end{aligned} 
    \end{equation}
    Also define $L: P(a,b) \to C^\infty (\bR, TM)$ by $L(\eta)(t) = \dot{\eta}(t) + \nabla f(\eta(t))$ for any $\eta \in P(a,b)$. 
\end{defn}

The following result describes when a Morse function is Morse-Smale. 

\begin{thm}
    Suppose that $(M, g)$ is a Riemannian manifold and let $f: M\to \bR$ be Morse. Suppose that $g$ is a nice metric. Then for any distinct critical points $p, q$ of $f$, $W^u(p)$ intersects transversely with $W^s(q)$ if and only if $dL$ is surjective at $F(p,q)$ \cite{Cohen}. 
\end{thm}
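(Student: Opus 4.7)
The plan is to study $dL_\eta$ for $\eta \in F(p,q)$ as a linear ODE operator along $\eta$ and to match its kernel and cokernel with geometric data describing the intersection $W^u(p) \cap W^s(q)$ at the point $\eta(0)$. This reduces the question to a Fredholm index count.

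First, I compute the linearization. A tangent vector to $P(p,q)$ at $\eta$ is a vector field $X$ along $\eta$ decaying as $t \to \pm\infty$, and a short covariant-derivative calculation gives
\[
dL_\eta(X)(t) \;=\; \nabla_t X(t) + A_\eta(t)\,X(t),
\]
where $A_\eta(t) \in \text{End}(T_{\eta(t)} M)$ is $\text{Hess}\,f$ at $\eta(t)$, viewed via $g$ as a self-adjoint endomorphism. Because $g$ is a nice metric, in the distinguished charts near $p$ and $q$ the gradient flow is already linear, so $A_\eta(t)$ converges as $t \to \pm\infty$ to the hyperbolic endomorphisms $\text{Hess}\,f(p)$ and $\text{Hess}\,f(q)$; this asymptotic hyperbolicity makes $dL_\eta$ a Fredholm operator on appropriate (e.g.\ weighted Sobolev) spaces of vector fields along $\eta$.

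Next, I identify the kernel with the intersection of tangent spaces. Given $v \in T_{\eta(0)} M$, there is a unique $X$ on $\bR$ solving $\nabla_t X + A_\eta X = 0$ with $X(0) = v$; by linear stable-manifold theory applied to the asymptotic hyperbolic operators above, $X(t) \to 0$ as $t \to -\infty$ iff $v \in T_{\eta(0)} W^u(p)$, and $X(t) \to 0$ as $t \to +\infty$ iff $v \in T_{\eta(0)} W^s(q)$. Hence evaluation at $t = 0$ gives an isomorphism
\[
\ker(dL_\eta) \;\cong\; T_{\eta(0)} W^u(p) \,\cap\, T_{\eta(0)} W^s(q).
\]
The Fredholm index of $dL_\eta$, computed from the asymptotic data via spectral flow, equals $\text{ind}(\text{Hess}\,f(p)) - \text{ind}(\text{Hess}\,f(q)) = \dim W^u(p) + \dim W^s(q) - n$. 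Combined with $\dim\ker - \dim\text{coker} = \text{ind}$, this yields
\[
\dim\text{coker}(dL_\eta) \;=\; n - \dim\!\bigl(T_{\eta(0)} W^u(p) + T_{\eta(0)} W^s(q)\bigr),
\]
so $dL_\eta$ is surjective iff $T_{\eta(0)} W^u(p) + T_{\eta(0)} W^s(q) = T_{\eta(0)} M$, i.e.\ iff $W^u(p)$ meets $W^s(q)$ transversely at $\eta(0)$. Since stable and unstable manifolds are flow-invariant and every point of $W^u(p) \cap W^s(q)$ lies on some $\eta \in F(p,q)$, quantifying over all $\eta$ recovers transversality on the whole intersection.

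The main obstacle is the Fredholm/index step: choosing function spaces (vector fields along $\eta$ with exponential decay at both ends is one standard option), verifying that $dL_\eta$ is Fredholm there, and computing its index from the asymptotic operators. The nice-metric hypothesis is exactly what makes $A_\eta(t)$ approach hyperbolic limits, and is therefore precisely where it enters; the kernel identification and the algebraic passage from the index identity to transversality are then routine.
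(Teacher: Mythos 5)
The paper does not prove this theorem; it is stated and attributed to \cite{Cohen}, and the author explicitly says in the introduction that only the results accompanied by proofs are original. So there is no in-paper argument to compare against. Your outline is, however, precisely the standard Fredholm-theoretic proof that appears in the cited source and in the Morse--Floer literature generally: linearize $L$ along a flow line to get $\nabla_t + A_\eta(t)$ with asymptotically hyperbolic $A_\eta(\pm\infty) = \mathrm{Hess}\,f(p), \mathrm{Hess}\,f(q)$ (this is where the nice-metric hypothesis is used), identify $\ker dL_\eta$ with $T_{\eta(0)}W^u(p)\cap T_{\eta(0)}W^s(q)$ via the linear (un)stable subspaces, compute $\mathrm{ind}\,dL_\eta = \lambda_p - \lambda_q = \dim W^u(p) + \dim W^s(q) - n$ by spectral flow, and conclude $\dim\mathrm{coker}\,dL_\eta = n - \dim\bigl(T_{\eta(0)}W^u(p) + T_{\eta(0)}W^s(q)\bigr)$. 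The algebra is right and the passage to transversality along the whole intersection via flow-invariance is sound. The pieces you flag as needing work --- choice of weighted Sobolev space, the Fredholm property, and the spectral-flow index formula --- are indeed the substantive lemmas; they are nontrivial but standard, and stating them as the remaining obstacles is an accurate assessment rather than a gap in the argument's logic.
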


Then I present the results about Morse--Bott functions. 

\begin{defn}
    Let $f: M \to \bR$ be smooth and $N \siq M$ be a critical submanifold of $f$. Let $v(N) \to N$ be the normal bundle of $N$. Given $x \in N$, let $\text{Hess}_x^N (f): v_x(N) \times v_x(N) \to \bR$ be the restriction of $\text{Hess}_x f$ to $v_x(N)$. $N$ is called a degenerate critical manifold if $\text{Hess}_x^N (f)$ is non-degenerate for each $x \in N$ and $f$ is called Morse--Bott if its critical points are disjoint union of non-degenerate critical manifolds. 
\end{defn}
\textbf{Remark. } Intuitively, a Morse--Bott function is a smooth function whose cross sections along each of its critical manifolds are Morse. We can also view a Morse--Bott function as a family of parametrized Morse functions. Thus, similar as in the Morse function case, we may define the index of a connected $N$ to be $\text{ind}(\text{Hess}_x^N (f))$ for any $x \in N$. Note that the index map is continuous, so it does not change value on $N$. \\

Many results about Morse functions can be generalized to Morse--Bott functions. The first one says that a Morse--Bott function looks like a quadratic function along the normal of its critical manifolds. The second is about the stable and unstable manifolds of a critical manifold of it; which ``divides" the domain of the function in to different parts with respect to limiting behavior of GF. Conversely, one may ask what points in a critical manifold can be the limit of a non-stationary GF. This is characterized by Corollary \ref{Cor Every point in N is a limit of a GF}. Finally, we show that if a GF $\gamma$ converges to some $p$ in a critical manifold of a Morse--Bott function, then it is ``biased" towards the normal space at $p$, in the sense that the tangential change of $\gamma$ can be eventually bounded by the square of its normal change (Corollary \ref{Cor Normal bias of GF for Morse--Bott func}). As a result, if $p$ is a local minimum then $\gamma$ converges to $p$ exponentially fast. 

\begin{thm}
    Let $f: M\to \bR$ be Morse--Bott and $N$ a $l$-dimensional critical submanifold of $f$ with index $\lambda$. For any $x \in N$, there are open $U \siq N$ and open $V \siq M$, both containing $x$, and a diffeomorphism $\varphi: U \times \bR^\lambda \times \bR^{n-l-\lambda} \to V$ such that 
    \begin{equation*}
        f \circ \varphi(u; (v_i)_{i=1}^\lambda ; (w_i)_{i=1}^{n-l-\lambda}) = f(x) + \left( -\sum_{i=1}^\lambda v_i^2 \right) + \left( \sum_{i=1}^{n-l-\lambda} w_i^2 \right). 
    \end{equation*} 
    \cite{Banyaga}\cite{Cohen}
\end{thm}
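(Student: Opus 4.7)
My plan is to reduce the statement to a parameterized version of the Morse lemma by first trivializing a tubular neighborhood of $N$ near $x$, and then applying the Morse lemma ``fiberwise" with smooth dependence on the base parameter. First I pick a chart of $N$ centered at $x$, obtaining an open $U \siq N$ with $x \in U$ that is diffeomorphic to an open subset of $\bR^l$; after shrinking $U$ if necessary, the normal bundle $v(N)|_U$ is trivializable, so the tubular neighborhood theorem (applied via the exponential map of $g$) yields a diffeomorphism $\Psi: U \times B \to V_0 \siq M$ for some open ball $B \siq \bR^{n-l}$ centered at $0$, with $\Psi(u, 0) = u$ for all $u \in U$ and $d\Psi(u,0)$ mapping $\{u\} \times \bR^{n-l}$ isometrically onto $v_u(N)$.

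Next I transfer $f$ to these coordinates by defining $F(u, z) = f(\Psi(u, z)) - f(x)$. Since the connected component of the critical submanifold through $x$ has $\nabla f \equiv 0$, $f$ is constant on $N$ locally, hence $F(u,0) = 0$ for all $u \in U$; for the same reason $\partial_z F(u,0) = 0$. Applying Taylor's theorem with integral remainder in the $z$-variable yields a smooth symmetric matrix-valued $A(u,z)$ with
\begin{equation*}
    F(u, z) = \tfrac{1}{2} \< z, A(u, z) z \>, \quad A(u, 0) \text{ representing } \text{Hess}_u^N(f)
\end{equation*}
under the trivialization. By the Morse--Bott hypothesis, $A(u,0)$ is non-degenerate of constant index $\lambda$ on $U$. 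The goal is now to find, after shrinking $U$ and $B$, a smooth map $\Phi: U \times B \to \bR^{n-l}$ such that each $\Phi_u := \Phi(u, \cdot)$ is a diffeomorphism onto its image with $\Phi_u(0) = 0$, and
\begin{equation*}
    F(u, \Phi_u(v, w)) = - \sum_{i=1}^\lambda v_i^2 + \sum_{i=1}^{n-l-\lambda} w_i^2.
\end{equation*}
Setting $\varphi(u; v, w) = \Psi(u, \Phi_u(v, w))$ and composing with a diffeomorphism $\bR^\lambda \times \bR^{n-l-\lambda} \cong$ open ball then finishes the proof.

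The main obstacle is producing $\Phi$ with smooth dependence on $u$, i.e. proving the parameterized Morse lemma. I would do this in two stages. First, I linearly diagonalize the constant part: since $A(u,0)$ is a smoothly varying family of symmetric, non-degenerate matrices of fixed signature $(\lambda, n-l-\lambda)$, a smooth ``square root" argument (for instance, writing $A(u,0) = L(u)^T D L(u)$ with $D = \text{diag}(-I_\lambda, I_{n-l-\lambda})$ and $L$ smooth in $u$, obtained from a smoothly chosen frame adapted to the negative and positive eigenspaces) produces a smooth linear change of coordinate $z \mapsto L(u)^{-1} z$ after which $A(u,0) = D$. Second, to absorb the $z$-dependence of $A(u,z)$, I apply the Moser homotopy trick to the family $F_t(u, z) := \tfrac{1}{2}\<z, ((1-t) A(u,0) + t A(u,z)) z\>$: solving for a time-dependent vector field $X_t$ on $B$ (depending smoothly on $u$) with $X_t(u, 0) = 0$ and $dF_t(X_t) = \partial_t F_t$, the flow of $X_t$ at time $1$ provides $\Phi_u$. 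The smoothness in $u$ follows because the defining linear equation for $X_t$ has smooth coefficients with solution guaranteed near $z = 0$ by non-degeneracy of $A(u,0)$, and smooth dependence on parameters of ODEs then transfers to $\Phi$.
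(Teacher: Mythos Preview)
The paper does not actually prove this theorem: it is stated with citations to \cite{Banyaga} and \cite{Cohen} and no argument is given. So there is nothing to compare your approach against within the paper itself. That said, your outline is the standard route to the Morse--Bott lemma (tubular neighborhood to trivialize the normal bundle, Taylor expansion to exhibit $F(u,z)=\tfrac12\langle z,A(u,z)z\rangle$, smooth diagonalization of $A(u,0)$, then a Moser-type deformation to kill the $z$-dependence of $A$), and it is essentially correct. Two small points: the Moser equation should read $dF_t(X_t)+\partial_tF_t=0$ rather than $dF_t(X_t)=\partial_tF_t$, so that $\tfrac{d}{dt}(F_t\circ\phi_t)=0$; and your very last step, ``composing with a diffeomorphism $\bR^\lambda\times\bR^{n-l-\lambda}\cong\text{open ball}$'', would destroy the quadratic normal form you just obtained, since such a diffeomorphism does not preserve $-|v|^2+|w|^2$. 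In truth this reflects an imprecision in the theorem as stated in the paper: the Morse--Bott lemma only produces a chart on an open neighborhood of the zero section in the normal directions, not on all of $U\times\bR^\lambda\times\bR^{n-l-\lambda}$. Your argument correctly yields the local statement; simply drop the final composition and state the domain as $U\times B$ for a ball $B\subset\bR^{n-l}$.
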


Let $N$ be a non-degenerate critical manifold of $f$. Analogous to the case of Morse functions, we can define the stable and unstable manifolds of $N$, 
\begin{equation}
\begin{aligned}
    W^s(N) &:= \{x \in M: \limftyt \gamma_x(t) \in N\}; \\ 
    W^u(N) &:= \{x \in M: \lim_{t\to-\infty} \gamma_x(t) \in N \}. 
\end{aligned}
\end{equation} 
Also, for critical manifolds $N_1, N_2$ of a Morse--Bott function $f$, define $W(N_1, N_2) := W^u(N_1) \cap W^s(N_2)$. With these notations, Theorem \ref{W^t(p,q) of Morse-Smale} can be generalized as follows. 

\begin{thm}
    Let $f: M\to \bR$ be a Morse--Bott function. Suppose that for any connected critical manifolds $N_1, N_2$ of $f$, $W^u(N_1)$ and $W^s(N_2)$ intersect transversely, then the following results hold.
    \begin{itemize}
        \item [(a)] $W(N_1, N_2)$ is a smooth manifold with 
        \begin{equation*}
            \dim (W(N_1, N_2)) = n_1 + \lambda_1 - \lambda_2, 
        \end{equation*} 
        where $n_1 = \dim N_1$, $\lambda_1$ is the index of $N_1$ and $\lambda_2$ is the index of $N_2$. 
        \item [(b)] For any $t$ between $f(N_1)$ and $f(N_2)$, $W^t(N_1, N_2) := W(N_1, N_2) \cap f^{-1}(t)$ is a submanifold of $W(N_1, N_2)$ of codimension 1. 
        \item [(c)] The map $\phi: W^t(N_1, N_2) \times \bR \to W(N_1, N_2)$, $\phi(p, s) = \gamma_p(s)$ is a diffeomorphism. 
    \end{itemize}
\end{thm}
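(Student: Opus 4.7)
The plan is to handle the three parts in sequence, modeled on the proof of Theorem \ref{W^t(p,q) of Morse-Smale}, and relying on the parametrized analogue of the Stable Manifold Theorem. Specifically, I will freely use that for a non-degenerate critical submanifold $N$ of dimension $l$ and index $\lambda$, the sets $W^u(N)$ and $W^s(N)$ are smooth submanifolds of dimensions $l+\lambda$ and $n-\lambda$ respectively. This is the Morse--Bott analogue of the Stable Manifold Theorem and follows from the preceding theorem by assembling the local disk-bundle structure in normal-form coordinates and extending it globally by the flow. I restrict attention to the case $N_1 \ne N_2$ with $f(N_1) > f(N_2)$; the case $N_1 = N_2$ gives $W(N,N) = N$ by the monotonicity argument in the remark following the Morse-Smale definition, and the dimension formula $n_1 + \lambda_1 - \lambda_1 = n_1$ is automatic.

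For (a), the transversality hypothesis and the standard dimension formula for transverse intersections give directly
\begin{equation*}
\dim W(N_1,N_2) = \dim W^u(N_1) + \dim W^s(N_2) - n = (n_1+\lambda_1)+(n-\lambda_2)-n = n_1+\lambda_1-\lambda_2.
\end{equation*}
For (b), the first step is to show $W(N_1,N_2)$ contains no critical points of $f$: any such point would be a fixed point of the flow and would therefore have to lie in both $N_1$ and $N_2$, which are disjoint connected components of the critical set. Since $W^u(N_1)$ and $W^s(N_2)$ are each flow-invariant, so is $W(N_1,N_2)$, hence $-\nabla f$ is tangent to $W(N_1,N_2)$ and nonzero there, which means $d(f|_{W(N_1,N_2)})$ is everywhere nonzero. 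Every $t\in(f(N_2),f(N_1))$ is therefore a regular value of $f|_{W(N_1,N_2)}$, and $W^t(N_1,N_2)$ is a codimension-$1$ submanifold by the regular value theorem.

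For (c), smoothness of $\phi$ is smooth dependence of ODE solutions on initial data. For injectivity, if $\phi(p_1,s_1)=\phi(p_2,s_2)$, then $p_1$ and $p_2$ lie on a common flow line, and strict monotonicity of $f$ along flow lines combined with $f(p_1)=f(p_2)=t$ forces $p_1=p_2$, after which a second application of monotonicity gives $s_1=s_2$. For surjectivity, given $x\in W(N_1,N_2)$, the map $s\mapsto f(\gamma_x(s))$ is strictly decreasing with limits $f(N_1)$ and $f(N_2)$ at $\mp\infty$, so by the intermediate value theorem there is a unique $s_0$ with $f(\gamma_x(-s_0))=t$, and then $\phi(\gamma_x(-s_0),s_0)=x$. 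Finally, $d\phi$ at any $(p,s)$ is injective because the image of the first factor is the $\phi_s$-pushforward of $T_p W^t(N_1,N_2)\subseteq \ker df$, while the image of the $\bR$-factor is $-\nabla f(\gamma_p(s))$, which is not in $\ker df$ since $\nabla f \ne 0$ on $W(N_1,N_2)$; source and target have the same dimension $n_1+\lambda_1-\lambda_2$, so $\phi$ is a local diffeomorphism, and combined with bijectivity, a diffeomorphism.

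The main obstacle I anticipate is the Morse--Bott Stable Manifold Theorem invoked at the outset; the version quoted in section \ref{section 3} applies only to isolated critical points, and extending it to a critical submanifold $N$ requires showing that the local stable and unstable sets assemble into global smooth fiber bundles over $N$. The cleanest route is to use the Morse--Bott normal form to identify $W^u_{\mathrm{loc}}(N)$ and $W^s_{\mathrm{loc}}(N)$ as trivial disk bundles over a chart in $N$, then glue by flow-invariance and patch across charts. Once this structural input is granted, parts (a), (b), (c) follow by essentially the same transversality-plus-flow arguments as in the Morse-Smale case of Theorem \ref{W^t(p,q) of Morse-Smale}.
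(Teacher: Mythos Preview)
Your proposal is correct and follows essentially the same route as the paper: transversality and the Morse--Bott Stable Manifold Theorem for (a), regularity of $f$ on $W(N_1,N_2)$ for (b), and smoothness/injectivity/surjectivity plus a rank count for (c). Your treatment of (b) via the restriction $f|_{W(N_1,N_2)}$ is in fact a bit cleaner than the paper's, which shows only that $d_pf$ is surjective on $M$ and leaves the passage to the submanifold implicit.

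One imprecision in your rank argument for (c): you place the pushforward $(d\phi_s)(T_pW^t)$ and the vector $-\nabla f(\gamma_p(s))$ on opposite sides of ``$\ker df$'', but these are kernels at two \emph{different} points, and the flow does not carry $\ker d_pf$ to $\ker d_{\gamma_p(s)}f$. The fix---which is exactly what the paper does---is to observe that $\partial_s\phi(p,s)=-\nabla f(\gamma_p(s))=d\phi_s\bigl(-\nabla f(p)\bigr)$, so both summands are $d\phi_s$-images of subspaces at $p$; since $d\phi_s$ is an isomorphism and $\nabla f(p)\notin T_pW^t(N_1,N_2)\subseteq\ker d_pf$, independence follows. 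With that adjustment your argument matches the paper's.
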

\begin{proof}
\begin{itemize}
    \item [(a)] Just note that $W^u(N_1)$ is a submanifold of $M$ with dimension $\dim N_1 + \lambda_1$ and $W^s(N_2)$ is a submanifold of $M$ with dimension $\dim N_2 + ((n - \dim N_2) - \lambda_2) = n - \lambda_2$. Also note that by hypothesis, $W^u(N_1)$ and $W^s(N_2)$ intersect transversely. 

    \item [(b)] For any $p \in W(N_1, N_2) \cap f^{-1}(t)$, the GF $\gamma_p$ satisfies $\limftyt \gamma_p(t) \in N_2$ because $p \in W^s(N_2)$. Therefore, $\nabla f(p) \neq 0$. Equivalently, $d_p f$ is surjective at each $p \in W(N_1, N_2) \cap f^{-1}(t)$. Thus, $W^t(N_1, N_2) = W(N_1, N_2) \cap f^{-1}(t)$ is a submanifold of $W(N_1, N_2)$ with codimension 1. 

    \item [(c)] By the uniqueness of solution to the ODE 
    \begin{equation*}
        \dot{\gamma}(s) = - \nabla f(\gamma(s)), 
    \end{equation*} 
    we deduce that $\phi$ is injective. Since $\nabla f$ is smooth, the solution to the ODE depends smoothly on initial condition and time $s$. Thus, $\phi$ is smooth. Given $q \in W(N_1, N_2)$, consider the GF $\gamma_q: \bR \to W(N_1, N_2)$. Since 
    \begin{equation*}
        \limftyt \gamma_q(t) \in N_2, \quad \quad \lim_{t \to -\infty} \gamma_q(t) \in N_1, 
    \end{equation*} 
    and since $f\circ \gamma_q$ is monotonic, there exist some $s \in \bR$ with $\gamma_q(-s) := p \in W^t(N_1, N_2)$. Then $\gamma_p (s) = \gamma_q(s - s) = q$. Thus, $\phi$ is surjective. 

    Finally, we will show that for any $(p,s) \in W^t(N_1, N_2) \times \bR$, $d_{(p,s)} \phi$ has rank equal to $\dim W(N_1, N_2)$. Since $\dim (W^t(N_1, N_2) \times \bR) = \dim W(N_1, N_2)$, this would imply that $\phi$ is a bijective local diffeomorphism and thus a diffeomorphism. First consider $s = 0$. Note that the map $W^t(N_1, N_2) \ni p \mapsto \phi(p, 0)$ is just the inclusion map from $W^t(N_1, N_2)$ to $\phi(p,0)$. Thus, the rank of its differential at $p$ is 
    \begin{equation*}
        \dim W^t(N_1, N_2) = \dim W(N_1, N_2) - 1. 
    \end{equation*} 
    Given $s \in \bR$, consider the diffeomorphism 
    \begin{equation*}
        \psi_s: W(N_1, N_2) \to W(N_1, N_2), \psi_s(q) = \gamma_q(s). 
    \end{equation*} 
    Clearly, 
    \begin{equation*}
        \text{rank}\, \partial_p \phi(p,s) = \text{rank}\, d_p(\psi_s \circ \phi(p,0)) = \dim W^t(N_1, N_2), 
    \end{equation*} 
    where $\partial_p \phi(p,s)$ denotes the derivatives of $\phi$ with respect to the $p$-space $W^t(N_1, N_2)$ at the point $(p,s)$. We need only show that $\partial_s \phi (p,s)$ is not in the range of $\partial_p \phi (p,s)$. 

    Let $N = \psi_s(W^t(N_1, N_2))$. On one hand, we have 
    \begin{equation*}
        \partial_s \phi(p,s) \left( T_{(p,0)} W^t(N_1, N_2) \times \{0\} \right) \siq \text{ran} (d_p \psi_s (W^t(N_1, N_2))) \siq T_{\gamma_p(s)} N. 
    \end{equation*} 
    On the other hand, using 
    \begin{equation*}
        \frac{d}{d\zeta} \psi_s \circ \phi(p,s) = \frac{d}{d\zeta} \gamma_{\phi(p,\zeta)}(s) = \frac{d}{d\zeta} \gamma_p(s+\zeta), 
    \end{equation*} 
    we can see that 
    \begin{equation*}
        \dot{\gamma}_p(s) = \frac{d}{d\zeta}(\psi_s \circ \phi(p,\cdot))|_{\zeta = 0} = d_p\psi_s \circ \dot{\gamma}_p(0) = d_p \psi_s \circ \nabla f(p). 
    \end{equation*}
    Since $\nabla f(p) \notin T_p W^t(N_1, N_2)$ and since $\psi_s$ is a diffeomorphism, $\dot{\gamma}_p(s) \notin T_{\gamma_p(s)} N$. Therefore, 
    \begin{align*}
        \text{rank}\, d_{(p,s)} \phi 
        &= \text{rank}\, \partial_p \phi(p,s) + \text{rank}\, \partial_s \phi(p,s) \\ 
        &= \dim W^t(N_1, N_2) + 1 \\
        &= \dim W(N_1, N_2). 
    \end{align*} 
    This implies that $\phi$ is a local diffeomorphism and completes the proof. 
\end{itemize}
\end{proof}

\begin{cor}\label{Cor Every point in N is a limit of a GF}
    Let $f: \bR^n \to \bR$ be a non-negative Morse--Bott function. Suppose that $f^{-1}(0)$ is a non-empty submanifold of $\bR^n$. Then for any $p \in N$, there is a non-constant GF $\gamma$ with $\limftyt \gamma(t) = p$.
\end{cor}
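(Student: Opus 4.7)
First I would pin down the local structure at $p$. Because $f \geq 0$ on $\bR^n$ and $f \equiv 0$ on $N := f^{-1}(0)$, the point $p$ is a local minimum of $f$, so the normal Hessian $\text{Hess}_p^N f$ on the normal fibre $v_p(N)$ is positive semidefinite; Morse--Bott non-degeneracy along $N$ then forces it to be positive definite, and the index of $N$ equals $\lambda = 0$. Applying the Morse--Bott normal form theorem stated above, there exist neighborhoods $U \subseteq N$ of $p$ and $V \subseteq \bR^n$ of $p$ and a diffeomorphism $\varphi: U \times \bR^{n-l} \to V$, where $l = \dim N$, satisfying
\begin{equation*}
    f \circ \varphi(u, w) = |w|^2, \qquad \varphi(U \times \{0\}) = N \cap V.
\end{equation*}

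Next I would linearize the gradient flow at $p$. Let $\tilde{g}$ be the pulled-back metric on $U \times \bR^{n-l}$. In these coordinates the GF equation becomes
\begin{equation*}
    \dot{\gamma}^{\,i} = -2\, \tilde{g}^{\,ij}(\gamma)\, w_j(\gamma),
\end{equation*}
where $j$ ranges only over the $(n - l)$ normal indices. Differentiating at $p$ yields a Jacobian $A$ whose $u$-columns vanish (every term carries a factor of $w$) and whose $w$-columns are the corresponding columns of $-2\,\tilde{g}^{-1}(p)$. A block computation then gives $\operatorname{spec}(A) = \{0\}^{\times l} \sqcup \operatorname{spec}(-2 G_{ww})$, where $\ker A = T_p N$ and $G_{ww}$ is the $w$-$w$ principal block of $\tilde{g}^{-1}(p)$. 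Positive definiteness of $\tilde{g}^{-1}(p)$ passes to its principal submatrix $G_{ww}$, so the remaining $n - l$ eigenvalues of $A$ are strictly negative. Thus $p$ is a partially hyperbolic equilibrium whose strong-stable subspace is $(n-l)$-dimensional and transverse to $T_p N$.

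I would then apply the local stable manifold theorem in this partially hyperbolic setting: it produces a $C^1$ submanifold $W^s_{\mathrm{loc}}(p) \subseteq V$ of dimension $n - l$, tangent at $p$ to the strong-stable subspace (hence transverse to $N$), such that every trajectory of the GF starting in $W^s_{\mathrm{loc}}(p)$ converges to $p$ exponentially. Assuming $l < n$ (otherwise $f \equiv 0$ and no non-constant GF exists at all), $W^s_{\mathrm{loc}}(p) \setminus \{p\}$ is non-empty, and after shrinking we may assume it meets $N$ only at $p$, so any $q$ in it satisfies $\nabla f(q) \neq 0$ and gives a non-constant GF $\gamma_q$ with $\lim_{t \to \infty} \gamma_q(t) = p$, as required.

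The step I expect to be the genuine obstacle is the invocation of the stable manifold theorem, since the neutral spectrum here is not a perturbative accident but coincides with the tangent space $T_p N$ of an entire submanifold of fixed points. This situation is covered by the theory of normally hyperbolic invariant manifolds (Hirsch--Pugh--Shub), or can be handled directly by a Perron-style contraction argument on the fast-stable splitting transverse to $N$; either route should be cited rather than reproved inside the corollary.
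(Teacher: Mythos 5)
Your argument is correct, but it takes a genuinely different route from the paper's. The paper dispatches this corollary in one line, noting it is a special case of Corollary~\ref{Cor: limit point correspondence}, whose proof (built on Lemma~\ref{Lemma: dense limit set}) combines Lojasiewicz-based density of attainable limit points in $f^{-1}(0)$ with a curve-length bound $l(\gamma_{x_0}) \leq C\,\mathrm{dist}(x_0,N)^\alpha$ and a Gronwall/compactness limiting argument on $\partial V$ to manufacture a trajectory converging exactly to $p$. That machinery is designed to cover the larger class of analytic and generalized Morse--Bott functions, at the cost of being indirect. You instead exploit the Morse--Bott structure head-on: the normal-form chart flattens $N$ to $\{w=0\}$ and turns $f$ into $|w|^2$; the linearization of the GF at $p$ is block upper-triangular with $\ker A = T_pN$ and normal spectrum $\operatorname{spec}(-2G_{ww})$, strictly negative because $G_{ww}$ is a symmetric positive-definite principal block of $\tilde g^{-1}(p)$; and the strong-stable leaf through $p$ (from the strong/pseudo-hyperbolic stable manifold theorem, or Hirsch--Pugh--Shub for the normally attracting invariant manifold $N$ of fixed points, or a Perron-type contraction transverse to $N$) supplies the required non-constant trajectories. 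Your version is shorter, more geometric, and yields exponential convergence for free, but it does not extend to the analytic or generalized Morse--Bott settings for which the paper's Section~\ref{section 5} argument was built. You are also right to flag that the stable manifold step is where care is needed, since the neutral directions span an entire fixed-point manifold rather than being an isolated center; citing Hirsch--Pugh--Shub (or proving the strong-stable foliation by a fiber-contraction directly in the normal-form coordinates) is the appropriate fix. Finally, your aside about the degenerate case $l=n$ is a genuine observation: there $f\equiv 0$ locally, no non-constant GF exists, and the statement is vacuously false; the paper's own proof via Corollary~\ref{Cor: limit point correspondence} also tacitly requires $l<n$ to extract an accumulation point $x\notin N$.
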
 
\begin{proof}
    This is a special case of Corollary \ref{Cor: limit point correspondence} which we will prove in the next section. 
\end{proof}

\begin{cor}\label{Cor Normal bias of GF for Morse--Bott func}
    Let $f: \bR^n \to \bR$ be a non-negative Morse--Bott function. Suppose that $f^{-1}(0)$ is a non-empty submanifold of $\bR^n$. Given a GF $\gamma$ with $\limftyt \gamma(t) =: p \in f^{-1}(0)$, we have 
    \begin{equation*}
        \varlimsup_{t \to \infty} \frac{|\calP(\gamma(t) - p)|}{|\calQ(\gamma(t) - p)|^2} < \infty, 
    \end{equation*} 
    where $\calP$ is the orthogonal projection onto $T_p f^{-1}(0)$ and $\calQ = I - \calP$. 
\end{cor}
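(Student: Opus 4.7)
The plan is to analyze the flow $\gamma$ near $p$ by splitting $\gamma(t) - p$ into tangential and normal parts, $\xi(t) := \calP(\gamma(t) - p)$ and $\eta(t) := \calQ(\gamma(t) - p)$, writing $a(t) = |\xi(t)|$, $b(t) = |\eta(t)|$, $r(t) = |\gamma(t)-p|$, and $N := f^{-1}(0)$. Set $H := \text{Hess}_p f$. Because $f \geq 0$ vanishes on $N$ and $p$ is a Morse--Bott critical point with index $0$, $H$ annihilates $T_p N$ and is positive definite on $(T_p N)^\perp$ with smallest eigenvalue, say, $2\mu > 0$. The first key estimate comes from expanding $\nabla f$ at $p$: $\nabla f(p+h) = Hh + O(|h|^2)$; since $H$ is symmetric with $\ker H \supseteq T_pN$, we have $Hh \in (T_pN)^\perp$, so $\calP(Hh) = 0$ and
\begin{equation*}
    |\dot\xi(t)| \;=\; |\calP \nabla f(\gamma(t))| \;\leq\; C\, r(t)^2.
\end{equation*}
The analogous Taylor expansion of $f$ itself gives $\mu\, b(t)^2 - C_1 r(t)^3 \;\leq\; f(\gamma(t)) \;\leq\; C_2\, b(t)^2 + C_1 r(t)^3$.

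The second step is to quantify the decay of $f$ and $r$ along $\gamma$. Here I would invoke the \L{}ojasiewicz apparatus developed in Section~\ref{section 5}: Morse--Bott functions have \L{}ojasiewicz exponent $\theta = 1/2$, yielding both a gradient bound $|\nabla f|^2 \geq c\, f$ near $p$ and a length estimate $r(t) \leq K\, f(\gamma(t))^{1/2}$. The gradient bound gives $\dot f \leq -c\, f$ along $\gamma$, so $f$ decays exponentially and $\int_t^\infty f(\gamma(s))\, ds \leq f(\gamma(t))/c$. Substituting $r^2 \leq K^2 f$ into the lower Taylor bound, $f \leq C_2 b^2 + C_1 K^3 f^{3/2}$; for $t$ large enough, $f(\gamma(t))$ is small enough to absorb the $f^{3/2}$ term on the left, which gives $b(t)^2 \geq c'\, f(\gamma(t))$.

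Finally, since $\xi(t) \to 0$ as $t \to \infty$, integrating the pointwise bound on $|\dot\xi|$ yields
\begin{equation*}
    a(t) \;=\; \left| \int_t^\infty \dot\xi(s)\, ds \right| \;\leq\; C \int_t^\infty r(s)^2\, ds \;\leq\; C K^2 \int_t^\infty f(\gamma(s))\, ds \;\leq\; \frac{C K^2}{c}\, f(\gamma(t)).
\end{equation*}
Dividing by $b(t)^2 \geq c'\, f(\gamma(t))$ gives $a(t)/b(t)^2 \leq C K^2 / (c\, c')$ for large $t$, so $\varlimsup_{t\to\infty} a/b^2 < \infty$, as required. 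The main obstacle is securing the lower bound $b^2 \gtrsim f$: the cubic Taylor remainder in $f$ is not \emph{a priori} controlled by $b^2$ if $a$ were comparable to or larger than $\sqrt{b}$. The \L{}ojasiewicz length estimate $r \leq K\sqrt{f}$ is exactly what bypasses this circularity, because it forces the remainder to be $O(f^{3/2})$, which is negligible next to $f$ once $\gamma$ is sufficiently close to $p$.
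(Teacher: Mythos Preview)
Your argument is correct and takes a genuinely different route from the paper's. The paper invokes the Morse--Bott normal form (Theorem~4.3) to pass to coordinates in which $f^{-1}(0)$ is the flat subspace $\{0\}\times\bR^{n-s}$; there the gradient system reads $\dot y=H(z)y+O(|y|^2)$, $\dot z=O(|y|^2)$, with remainders quadratic in the \emph{normal} variable $y$ alone (since $\nabla f$ vanishes identically along the flattened critical set, not just at $p$). Exponential decay of each $y_j$ is then read off directly from the linear part, and a somewhat ad hoc discrete-sum estimate bounds $\int_t^\infty |y|^2$ by a constant times $|y(t)|^2$, whence $|z(t)-z_p|\le C|y(t)|^2$. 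You instead stay in the original coordinates and Taylor-expand $\nabla f$ at the single point $p$, which only yields $|\dot\xi|=O(r^2)$ in the full distance $r$; to close the loop you import the \L ojasiewicz machinery of Section~\ref{section 5} (exponent $\theta=1/2$ for Morse--Bott), using the length bound $r\le K\sqrt{f}$ and the decay $\dot f\le -cf$ to convert $\int_t^\infty r^2$ into a multiple of $f(\gamma(t))$, and the same length bound to absorb the cubic remainder and obtain $b^2\ge c'f$. What each approach buys: the paper's proof is self-contained within Section~\ref{section 4} and never mentions \L ojasiewicz, but it leans on a coordinate change whose compatibility with the Euclidean projections $\calP,\calQ$ of the statement is left implicit, and the integer-sum step is inelegant; your proof works directly with the projections as stated, is cleaner, and makes the mechanism transparent (tangential drift $\lesssim\int f\lesssim f\lesssim b^2$), at the cost of a forward reference to Section~\ref{section 5}.
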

\begin{proof}
    The result is local, so by Theorem 4.3, we may for simplicity assume that $f^{-1}(0) = \{0\} \times \bR^{n-s} \siq \bR^n = \bR^s \times \bR^{n-s}$, where $s$ is the codimension of $f^{-1}(0)$. We may also write a point in $\bR^n$ as $(y,z)$, where $y \in \bR^s$ and $z \in \bR^{n-s}$. With these notations, on some compact neighborhood $U$ of $p$ the ODE for the GF $\gamma =: (y, z)$ takes the form 
    \begin{equation}
    \begin{aligned}
        \dot{y}(t) &= H(z(t)) y(t) + g(y(t), z(t)) \\ 
        \dot{z}(t) &= m(y(t), z(t)), 
    \end{aligned}   
    \end{equation}
     such that for any $(y,z) \in U$, $H(z)$ is negative definite, $H(z)$ changes smoothly with respect to $z$, $g(0,z) = m(0,z) = 0$, and $g(y,z) = O(|y|^2), \quad \quad m(y,z) = O(|y|^2)$. Therefore, we can also write 
     \begin{align*}
         \dot{y}(t) &= H(z(t)) y(t) + O(|y(t)|^2) \\ 
        \dot{z}(t) &= O(|y(t)|^2) 
     \end{align*} 
     for $t$ sufficiently large. 

    Denote $p = (0, z_p)$. What we need to show now is just 
    \begin{equation*}
        \varlimsup_{t \to \infty} \frac{|z(t) - z_p|}{|y(t)|^2} < \infty. 
    \end{equation*}
    Set 
    \begin{align*}
        \lambda &:= \inf \left\{ \frac{|\< H(z)h, h\>|}{|h|^2}: h \in \bR^s\cut \{0\}, (0,z) \in U \right\}; \\
        \lambda' &:= \sup \left\{ \frac{|\< H(z)h, h\>|}{|h|^2}: h \in \bR^s\cut \{0\}, (0,z) \in U \right\}; 
    \end{align*} 
    Since $\limftyt \gamma(t) = p$, $\limftyt g(\gamma(t)) = \limftyt m(\gamma(t)) = 0$. Thus, for each $1 \leq j \leq s$, $y_j(t)$ decreases at exponential rate. In particular, there is some $T > 0$ such that for any $\zeta \geq t > T$ and any $1 \leq j \leq s$, we have $|y_j(t)| < 1$, $\frac{\lambda}{2} |y_j(t)| \leq |\dot{y}_j(t)| \leq 2\lambda' |y_j(t)|$, and thus
    \[ D_1 |y_j(t)| e^{-\mu_1(\zeta - t)} \leq |y_j(\zeta)| \leq D_2 |y_j(t)| e^{-\mu_2(\zeta - t)} \] 
    for some $D_1, D_2, \mu_1, \mu_2 > 0$ depending only on $T$. 
    
    These observations imply that $y_j$ is decreasing and $y_j = 0$ or the sign of $y_j$ does not change on $(T, \infty)$. Thus, for any $t > T$ and any $1 \leq j \leq s$, 
    \begin{align*}
        \int_t^\infty y_j^2(\zeta) d\zeta 
        &\leq \sum_{k=\floor{t}}^\infty y_j^2(k) \\ 
        &\leq \left( \sum_{k=\floor{t}}^\infty y_j(k) \right)^2 \\ 
        &\leq \frac{4}{\lambda^2} \left( \int_{\floor{t}}^\infty \dot{y}_j(\zeta) d\zeta \right)^2 = \frac{4}{\lambda^2} y_j^2(\floor{t}), 
    \end{align*} 
    where $\floor{t}$ denotes the largest integer smaller than $t$. Since 
    \[ |y_j(t) \geq D_1 |y_j(\floor{t})|e^{-\mu_1(t-\floor{t})} \geq D_1 e^{-\mu_1} |y_j(\floor{t})|, \] 
    we have 
    \begin{equation*}
        |y_j(\floor{t})| \leq \frac{e^{\mu_1}}{D_1} |y_j(t)|. 
    \end{equation*} 
    Since $\dot{z}(t) = m(y(t), z(t)) = O(|y(t)|^2)$, there is some $C > 0$ such that $|\dot{z}(t)| \leq C|y(t)|^2$ for $t > T$. It follows that for $t > T$, 
    \begin{align*}
        |z(t) - z_p| \leq \int_t^\infty 
        &\leq C \sum_{j=1}^s \int_t^\infty y_j^2(\zeta) d\zeta \\ 
        &\leq C \frac{4}{\lambda^2} \frac{e^{\mu_1}}{D_1} \left( \sum_{j=1}^s y_j^2(t) \right). 
    \end{align*} 
    Letting $t \to \infty$, we see that 
    \begin{equation*}
        \varlimsup_{t \to \infty} \frac{|z(t) - z_p|}{|y(t)|^2} \leq C \frac{4}{\lambda^2} \frac{e^{\mu_1}}{D_1} < \infty. 
    \end{equation*}
\end{proof}
\textbf{Remark. } In \cite{Kurdyka} the authors show that if $f$ is analytic, the secant line 
\[
    \frac{\gamma(t) - \limftys \gamma(s)}{|\gamma(t) - \limftys \gamma(s)|}
\]
of any convergent GF $\gamma$ always has a limit. Thus, the result I give above further shows that the limit must be in the normal space of the critical manifold at the limit point. 

\section{Lojasiewicz inequality and Gradient Flow}\label{section 5}  

In the previous two sections, we discussed some properties of Morse and Morse--Bott functions, as well as the GF of them. Such function has an important property that its critical points are non-degenerate (or at least, non-degenerate in the normal space of the critical manifold for Morse--Bott function). However, not every gradient vector field has this property. For example, consider $f(x,y,z) = x^4 + y^2$. On a neighborhood of $0 \in \bR^3$ the GF converges to 0 and exhibits different behaviors from Morse or Morse--Bott functions. \\ 

Lojasiewicz inequality, which holds for (real) analytic functions, can be used to deal with functions with degenerate critical points. It is a generalization of  
\begin{equation}\label{Lojasiewicz ineq on real line}
    |f'(q)| \geq C|f(q) - f(p)|^\theta,  \quad \quad \exists\,\theta \in [1/2, 1) 
\end{equation} 
for a non-constant analytic function $f: \bR \to \bR$ near any one of its critical point $p$. Indeed, we have $f(q) = \sum_{i=0}^\infty a_i (q - p)^i$ whenever $q$ is in a neighborhood $U$ of $p$. If $i_0 \geq 2$ is the smallest positive integer with $a_{i_0} \neq 0$ then there are $A_1, A_2, B_1, B_2 > 0$ such that for any $p \in U$, 
\begin{align*}
    A_1 |p-q|^{i_0} &\leq |f(q) - f(p)| \leq A_2 |p-q|^{i_0}; \\ 
    B_1 |p-q|^{i_0} &\leq |f'(q) - 0| \leq B_2 |p-q|^{i_0}, 
\end{align*} 
from which we can easily see that equation (\ref{Lojasiewicz ineq on real line}) holds. 

\begin{thm}[Lojasiewicz inequality]
    Let $n \geq 1$ and $f: \bR^n \to \bR$ be a (real) analytic function. For any critical point $p$ of $f$, there is a neighborhood $U$ of $p$ and constants $C > 0$, $\theta \in [1/2, 1)$ such that 
    \begin{equation}\label{Lojasiewicz ineq}
        |\nabla f(q)| \geq C|f(q) - f(p)|^\theta
    \end{equation} 
    for any $q \in U$ \cite{Feehan}. 
\end{thm}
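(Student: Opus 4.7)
I would argue by contradiction using the curve selection lemma for semi-analytic sets. The failure of (\ref{Lojasiewicz ineq}) on every small neighborhood of $p$ will be converted into a real-analytic arc through $p$ along which the chain rule forces a numerical contradiction.

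\textbf{Reduction and curve selection.} By translation, reduce to $p = 0$ and $f(p) = 0$. If $f$ vanishes identically near $0$ the inequality is trivial, so assume otherwise. Suppose for contradiction that no pair $(\theta, C) \in [1/2, 1) \times (0, \infty)$ and no neighborhood of $0$ make (\ref{Lojasiewicz ineq}) hold. Specializing to $(\theta, C) = (1 - 1/k, 1)$ for each integer $k \geq 2$, the semi-analytic set
\begin{equation*}
    S_k := \{q \in B(0, 1) : |\nabla f(q)|^k \leq |f(q)|^{k-1}\}
\end{equation*}
must contain $0$ in its closure. By the curve selection lemma for semi-analytic sets, for each $k$ there is a real-analytic arc $\gamma_k : [0, \varepsilon_k) \to \bR^n$ with $\gamma_k(0) = 0$, $\gamma_k(t) \neq 0$ for $t > 0$, and $\gamma_k(t) \in S_k$. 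After a Puiseux reparametrization I may take $|\gamma_k(t)|$ to be of order exactly $t$, so that $|\dot\gamma_k(t)|$ is bounded near $t = 0$.

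\textbf{Chain rule step.} Write $f(\gamma_k(t)) = a_k t^{r_k} + O(t^{r_k + 1})$ with $a_k \neq 0$ and $r_k > 0$; the alternative $f \circ \gamma_k \equiv 0$ forces $\nabla f \circ \gamma_k \equiv 0$ by membership in $S_k$, which puts us in a trivial subcase. The chain rule combined with Cauchy--Schwarz gives
\begin{equation*}
    \left| a_k r_k t^{r_k - 1} + O(t^{r_k}) \right| = \left| \langle \nabla f(\gamma_k(t)),\, \dot\gamma_k(t) \rangle \right| \leq |\nabla f(\gamma_k(t))| \cdot |\dot\gamma_k(t)|,
\end{equation*}
so $|\nabla f(\gamma_k(t))|$ vanishes at $t = 0$ to order at most $r_k - 1$. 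On the other hand, membership in $S_k$ gives $|\nabla f(\gamma_k(t))| \leq |f(\gamma_k(t))|^{1 - 1/k}$, an expression of order $(1 - 1/k) r_k$. Together these yield $(1 - 1/k) r_k \leq r_k - 1$, i.e., $r_k \geq k$.

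\textbf{The main obstacle.} It remains to derive a contradiction from $r_k \to \infty$, and this is where the deep content of the theorem sits. The cleanest path I see is to invoke the zero-set Lojasiewicz inequality $|f(q)| \geq c\, d(q, f^{-1}(0))^N$ on a compact neighborhood of $0$, together with a refined curve selection ensuring that $d(\gamma_k(t), f^{-1}(0))$ is itself of order exactly $t$; this would give $r_k \leq N$ uniformly, contradicting $r_k \geq k$ for large $k$. A more powerful alternative is to invoke Hironaka's resolution of singularities, writing $f \circ \pi$ locally as a monomial $y^\alpha \cdot u(y)$ with $u(0) \neq 0$ on a resolved space, verifying (\ref{Lojasiewicz ineq}) directly in that monomial normal form with $\theta = 1 - 1/|\alpha|$, and pulling the inequality back through $\pi$. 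Either route depends on a genuinely non-elementary input from real analytic geometry, which is why this theorem sits outside the elementary Morse and Morse--Bott framework of the earlier sections.
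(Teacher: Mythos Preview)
The paper does not supply a proof of this theorem at all: it is stated with a citation to \cite{Feehan}, and the subsequent remark only comments that the original argument ``is based on techniques in analytic geometry'' while Feehan later gave an elementary coordinate-based argument in the generalized Morse--Bott case. So there is no paper proof to compare against; you are attempting something the paper deliberately omits.

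As for your sketch itself: the contradiction setup and the chain-rule computation leading to $r_k \geq k$ are clean and correct, and this is indeed one of the classical routes (essentially the Bochnak--\L ojasiewicz argument). Your candid ``main obstacle'' paragraph is accurate about where the real content lies, but the first option you propose does not close. Invoking the zero-set inequality $|f(q)| \geq c\, d(q, f^{-1}(0))^N$ only bounds $r_k \leq N s_k$, where $s_k$ is the order of vanishing of $d(\gamma_k(t), f^{-1}(0))$; you would need $s_k$ bounded independently of $k$, and nothing in the curve selection lemma forces the arc to leave $f^{-1}(0)$ at a linear rate --- the arcs $\gamma_k$ could hug the zero set ever more closely as $k$ grows. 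Your phrase ``refined curve selection'' is doing work you have not supplied, and I do not see how to supply it without essentially already knowing the result. The Hironaka route you mention second is the one that actually goes through, and at that point the curve-selection scaffolding is unnecessary: one proves the inequality directly in the monomialized chart and pushes it down. So your proposal is an honest outline that correctly locates the depth of the theorem, but it is not yet a proof, and the paper makes no pretense of giving one either.
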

\textbf{Remark. } In fact, the inequality (\ref{Lojasiewicz ineq}) holds for generalized Morse--Bott functions at $p$ as well, which is a smooth function $g$ such that 
\begin{itemize}
    \item [(a)] $p$ is contained in a critical manifold $N$ of it. 
    \item [(b)] There is some $k \geq 2$ such that for any $j < k$, $D^j g(x) = 0$ for all $x \in N$ and $D^k g(p): \Pi_{j=1}^k \bR^n \to \bR$ is non-vanishing when restricted to $\Pi_{j=1}^k T_pN \siq \Pi_{j=1}^l \bR^n$. 
\end{itemize}
It is said that the original proof of inequality (\ref{Lojasiewicz ineq}) is based on techniques in analytic geometry \cite{Feehan}. Later \cite{Feehan} made an elementary, purely geometric and coordinate-based proof for generalized Morse--Bott functions. Moreover, for $f$ a non-negative analytic function and/or generalized Morse--Bott function, we have the following distance inequalities. 

\begin{cor}
    If $f(p) = 0$ and $Df(p) = 0$, there is a neighborhood $U$ of $p$ and constants $C > 0$ and $\alpha \geq 2$ such that 
    \begin{equation}
        f(x) \geq C \dist{x}{\calS}^\alpha, 
    \end{equation}
    where $\dist{y}{E} := \inf \{z \in E: |y - z|\}$ and $\calS$ is the set of critical points of $f$. In particular, if $\calS \siq f^{-1} (0)$ then 
    \begin{equation*}
        f(x) \geq C \dist{x}{f^{-1}(0)}^\alpha. 
    \end{equation*}
    \cite{Feehan}
\end{cor}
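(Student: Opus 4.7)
The plan is to derive this distance inequality directly from the Lojasiewicz inequality by exploiting the key observation that a Lojasiewicz-type bound on $|\nabla f|$ makes gradient flow trajectories have finite length, with an explicit bound in terms of $f(x)$. Since $f \geq 0$ and $f(p) = 0$, inequality (\ref{Lojasiewicz ineq}) becomes $|\nabla f(q)| \geq C_0 f(q)^\theta$ on some neighborhood $V$ of $p$, for some $\theta \in [1/2, 1)$.

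First, I would shrink $V$ to a smaller neighborhood $U \ni p$ so that for any $x \in U$, the forward gradient trajectory $\gamma_x$ stays inside $V$; this is a standard continuity argument that uses the length bound derived below to control the excursion of $\gamma_x$. Next, along any such trajectory, provided $f(\gamma(t)) > 0$, I would compute
\begin{equation*}
    \frac{d}{dt}\bigl[f(\gamma(t))^{1-\theta}\bigr] = -(1-\theta)\, f(\gamma(t))^{-\theta}\,|\nabla f(\gamma(t))|^2 \leq -(1-\theta)\, C_0\, |\nabla f(\gamma(t))|,
\end{equation*}
where the inequality uses the Lojasiewicz bound. Integrating from $0$ to $T$ gives
\begin{equation*}
    (1-\theta)\, C_0 \int_0^T |\nabla f(\gamma(t))|\, dt \leq f(x)^{1-\theta} - f(\gamma(T))^{1-\theta} \leq f(x)^{1-\theta},
\end{equation*}
so the arc length of $\gamma_x$ over $[0, T]$ is bounded uniformly by $f(x)^{1-\theta}/[(1-\theta)C_0]$.

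Letting $T \to \infty$, the trajectory has finite total length and hence converges to some limit $q$. Along any sequence $t_n \to \infty$, the integral bound forces $|\nabla f(\gamma(t_n))|$ to have a subsequence tending to $0$, so $\nabla f(q) = 0$, i.e.\ $q \in \calS$. Therefore
\begin{equation*}
    \dist{x}{\calS} \leq |x - q| \leq \int_0^\infty |\nabla f(\gamma(t))|\, dt \leq \frac{f(x)^{1-\theta}}{(1-\theta)\, C_0},
\end{equation*}
which, after rearranging and setting $\alpha := 1/(1-\theta) \in [2, \infty)$ and $C := [(1-\theta) C_0]^\alpha$, yields $f(x) \geq C\, \dist{x}{\calS}^\alpha$ on $U$ (the case $f(x)=0$ being trivial since then $x \in \calS$). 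The second statement follows immediately: under $f \geq 0$, any $y$ with $f(y)=0$ is a global minimum and so $\nabla f(y)=0$, giving $f^{-1}(0) \subseteq \calS$; combined with $\calS \subseteq f^{-1}(0)$ this forces $\calS = f^{-1}(0)$, so the two distances coincide.

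The main obstacle will be the step of ensuring that the forward gradient trajectory from $x$ actually remains inside the Lojasiewicz neighborhood $V$ long enough for the integral bound to apply; this is the reason we must pass to a smaller $U$. The usual trick is to use the inequality itself in a bootstrapping fashion: for $x$ close to $p$, the would-be length bound $f(x)^{1-\theta}/[(1-\theta)C_0]$ is small, so any trajectory that attempts to leave $V$ before converging must exceed this length, a contradiction. This continuity-type argument, together with care over the case $f(\gamma(t)) \to 0$ at finite time (where $\gamma$ lands in $\calS$ and the inequality $\dist{x}{\calS} \leq \text{length}(\gamma)$ is even simpler), closes the proof.
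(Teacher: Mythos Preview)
Your argument is correct and is precisely the standard derivation of the distance inequality from the Lojasiewicz gradient inequality: bound the arc length of the forward gradient trajectory by $f(x)^{1-\theta}/[(1-\theta)C_0]$, observe that the endpoint lies in $\calS$, and rearrange. The paper itself does not prove this corollary at all---it merely states it and cites \cite{Feehan}---so there is no in-paper proof to compare against; your approach is essentially what one finds in that reference and in the surrounding literature (and indeed the paper later invokes exactly this arc-length estimate as Theorem~\ref{Thm: GF convergence and curve length estimate}).

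One very minor remark: your justification that the limit point $q$ lies in $\calS$ is slightly roundabout. Once you know $\gamma(t)\to q$ and the trajectory stays in the Lojasiewicz neighborhood, continuity of $\nabla f$ gives $|\nabla f(\gamma(t))|\to|\nabla f(q)|$; if this limit were positive the arc-length integral would diverge, contradiction. This is cleaner than extracting a subsequence. Similarly, for the ``in particular'' clause you only need the inclusion $\calS\subseteq f^{-1}(0)$ to conclude $\dist{x}{f^{-1}(0)}\le\dist{x}{\calS}$, which suffices; arguing that the two sets coincide is true but unnecessary.
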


An important result of Lojasiewicz inequality is the local convergence of GF of non-negative analytic functions. 

\begin{thm}\label{Thm: GF convergence and curve length estimate}
    Let $f: \bR^n \to \bR$ be an analytic or generalized Morse--Bott function. Suppose that $p$ is a local minimum of $f$. Then $p$ has a neighborhood $U$ such that any non-constant GF with initial value $x_0 \in U$ converges as $t \to \infty$. Moreover, any such GF converges with some rate $0 < \beta < 1$ depending only on $p$, namely, 
    \[l(\gamma[t, \infty)) = O(|f(\gamma(t) - f(p)|^\beta)\] 
    as $t \to \infty$, where $l(\gamma[t, \infty))$ is the curve length of $\gamma[t, \infty)$ \cite{Absil}. 
\end{thm}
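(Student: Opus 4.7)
The plan is to invoke the Lojasiewicz inequality at $p$ and apply the classical trick of differentiating $f(\gamma(t))^{1-\theta}$ along the flow, which produces convergence and the length estimate in one stroke. Shift so that $f(p) = 0$; since $p$ is a local minimum, $f \geq 0$ on some ball $B_r(p)$. By Theorem 5.1 in the analytic case, or by its generalized Morse--Bott extension noted in the remark afterward, we may further shrink $r$ so that there exist constants $C > 0$ and $\theta \in [1/2, 1)$ with $|\nabla f(q)| \geq C f(q)^\theta$ for all $q \in B_r(p)$. Because $\gamma$ is non-constant, uniqueness of the ODE prevents $\gamma(t)$ from ever coinciding with a critical point; as $f^{-1}(0) \cap B_r(p)$ consists of critical points, we have $f(\gamma(t)) > 0$ throughout.

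The key differential inequality comes from setting $h(t) := f(\gamma(t))^{1-\theta}$: while $\gamma(t) \in B_r(p)$,
\begin{equation*}
    \frac{d}{dt} h(t) = -(1-\theta)\, f(\gamma(t))^{-\theta}\, |\nabla f(\gamma(t))|^2 \leq -(1-\theta)\, C\, |\dot\gamma(t)|,
\end{equation*}
using Lojasiewicz on one factor of $|\nabla f|$ and $|\dot\gamma| = |\nabla f(\gamma)|$ on the other. Integrating from $t$ to $T$ yields
\begin{equation*}
    (1-\theta)\, C \cdot l(\gamma[t, T]) \leq h(t) - h(T) \leq f(\gamma(t))^{1-\theta}.
\end{equation*}
This single estimate is the engine of the whole proof.

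The remaining work is a trapping/bootstrap argument ensuring $\gamma$ never leaves $B_r(p)$. I would choose $U \subset B_{r/2}(p)$ so small that $f(x_0)^{1-\theta}/((1-\theta)C) < r/2$ for every $x_0 \in U$, and set $T^* := \sup\{T : \gamma([0,T]) \subset B_r(p)\}$. For $t < T^*$, the length bound above gives $|\gamma(t) - x_0| \leq l(\gamma[0,t]) < r/2$, so $\gamma(t) \in B_{r}(p)$ and therefore $T^* = \infty$. Sending $T \to \infty$ then produces
\begin{equation*}
    l(\gamma[t, \infty)) \leq \frac{1}{(1-\theta)C}\, f(\gamma(t))^{1-\theta} = O\bigl(|f(\gamma(t)) - f(p)|^{\beta}\bigr),
\end{equation*}
with $\beta = 1 - \theta \in (0, 1/2]$; finite total length makes $\gamma$ Cauchy, hence convergent.

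The main obstacle is precisely the trapping step: the Lojasiewicz inequality is local, and a priori nothing prevents $\gamma$ from fleeing $B_r(p)$ before the estimate can act. The saving observation is that the length estimate itself supplies the trap, once absorbed into the smallness of $U$. A minor technical issue is keeping $f(\gamma(t))$ strictly positive so that $h$ is differentiable; this is handled by the non-constant hypothesis together with ODE uniqueness, as noted above.
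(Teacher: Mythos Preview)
Your argument is correct and is precisely the classical Lojasiewicz convergence proof: differentiate $f(\gamma)^{1-\theta}$, use the gradient inequality on one factor of $|\nabla f|^2$ to produce an arclength bound, and bootstrap via a trapping argument to keep the trajectory inside the ball where the inequality is valid. Note, however, that the paper does \emph{not} supply its own proof of this theorem---it is stated with a citation to \cite{Absil} and left unproved, consistent with the author's convention that only the results he developed himself carry proofs. So there is nothing in the paper to compare against; your write-up is essentially the argument one finds in the cited source, and it stands on its own.
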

\textbf{Remark. } This in fact has a generalization to $C^1$ functions obeying Lojasiewicz inequality. It is proved in \cite{Feehan}

\begin{cor}
    Let $f: \bR^n \to \bR$ be an analytic function and let $p$ be a local minimum of $f$. Suppose that $\gamma: [0, \infty) \to \bR^n$ is a GF such that for some $\{t_j\}_{j=1}^\infty$ diverging to $\infty$, $\limftyj \gamma(t_j) = p$. Then $\gamma$ converges to $p$ as $t \to \infty$ \cite{Absil}. 
\end{cor}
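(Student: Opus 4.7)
The plan is to invoke Theorem \ref{Thm: GF convergence and curve length estimate} on a suitable time-shift of $\gamma$, and then pin down the value of the limit by using the hypothesized subsequence. First I would dispose of the degenerate case: if $\gamma$ is a constant trajectory, then $\gamma \equiv \gamma(t_j) \to p$ forces $\gamma \equiv p$, and the conclusion is immediate. So assume henceforth that $\gamma$ is non-constant, in which case $\dot{\gamma}(t) \neq 0$ for every $t$ by ODE uniqueness (otherwise $\gamma$ would agree at some time with a stationary solution).

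Let $U$ be the neighborhood of $p$ supplied by Theorem \ref{Thm: GF convergence and curve length estimate}. Since $\gamma(t_j) \to p$ and $U$ is open, there is some index $J$ with $\gamma(t_J) \in U$. Define the time-shifted trajectory $\tilde{\gamma}(s) := \gamma(s + t_J)$ for $s \geq 0$; this is again a non-constant gradient flow of $f$, and it has initial value in $U$. By Theorem \ref{Thm: GF convergence and curve length estimate}, $\tilde{\gamma}(s)$ converges as $s \to \infty$ to some point $q \in \bR^n$. Because $\gamma$ differs from $\tilde{\gamma}$ only by a time translation, $\lim_{t\to\infty}\gamma(t) = q$ exists as well.

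It remains to identify $q$. On the one hand, $\gamma(t_j) \to q$ since $\gamma$ has now been shown to have a limit at $+\infty$; on the other hand, $\gamma(t_j) \to p$ by hypothesis. Uniqueness of limits then yields $q = p$, as desired. The only conceptually nontrivial observation in this plan is that Theorem \ref{Thm: GF convergence and curve length estimate} asserts only existence of a limit, without naming it as $p$ — in particular, shrinking $U$ does not a priori force every GF starting in $U$ to have limit exactly $p$ (other critical points of $f$ with value $f(p)$ could in principle lie nearby); that is precisely why the subsequential hypothesis is needed at the end. Beyond this minor bookkeeping I do not anticipate any substantive obstacle.
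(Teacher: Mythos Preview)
Your argument is correct. Note, however, that the paper does not actually supply its own proof of this corollary: it is stated with a citation to \cite{Absil} and left unproved (the author remarks in the introduction that only results \emph{with proofs} are his own contributions). So there is nothing in the paper to compare your approach against directly.

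That said, your route is the standard one and is precisely what the cited literature does: invoke the finite-length/convergence statement packaged in Theorem~\ref{Thm: GF convergence and curve length estimate} at a time after the trajectory has entered the good neighborhood $U$, conclude that the full limit exists, and then identify the limit using the subsequence. Your closing remark---that Theorem~\ref{Thm: GF convergence and curve length estimate} only asserts existence of a limit, not that the limit equals $p$, so the subsequential hypothesis is genuinely needed---is exactly the right observation.
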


I also did some characterization of analytic (and generalized Morse--Bott) functions, some of which are listed below. The first one is a summary of several results in \cite{Nowel}. I give a proof of it. 

\begin{thm}
    Let $f: \bR^n \to \bR$ be an analytic function and $p$ be a local minimum of $f$. For any $k$ sufficiently large, there is an open $U$ containing $p$ such that for any GF $\gamma$ with initial value in $U$, $\gamma$ intersects the set $Z := \{x \in \bR^n: f(x) - f(p) = |x - p|^{2k}\}$ at most once, and is transversal to it.
\end{thm}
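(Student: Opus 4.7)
The plan is to study the sign of $h(x) := f(x) - f(p) - |x - p|^{2k}$, whose zero set is $Z$, along an arbitrary gradient flow $\gamma$. Writing $\phi(t) := h(\gamma(t))$ and differentiating, one obtains
\[
\phi'(t) = -|\nabla f(\gamma(t))|^2 + 2k|\gamma(t) - p|^{2k-2}\,\langle \gamma(t) - p,\, \nabla f(\gamma(t))\rangle.
\]
I want to arrange that $\phi'(t) < 0$ whenever $\gamma(t) \in Z$ and $\gamma(t)$ lies close to $p$. Granted this, $\phi$ must cross $0$ transversally at any intersection and can never return to $0$ afterwards, which simultaneously yields the transversality and the at-most-one-intersection claims.

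For the sign estimate I would apply the Lojasiewicz inequality at $p$: $|\nabla f(x)| \geq C(f(x) - f(p))^\theta$ with $\theta \in [1/2,1)$ on some neighborhood of $p$. On $Z$ we have $f(x) - f(p) = |x-p|^{2k}$, so $|\nabla f(x)| \geq C|x-p|^{2k\theta}$. Bounding the cross term by Cauchy--Schwarz gives
\[
\phi'(t) \leq -|\nabla f(\gamma(t))|\bigl(|\nabla f(\gamma(t))| - 2k|\gamma(t) - p|^{2k-1}\bigr),
\]
so it suffices to force $|\nabla f(x)| > 2k|x-p|^{2k-1}$ on $Z$ near $p$. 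Substituting the Lojasiewicz bound, this reduces to $|x-p|^{2k(1-\theta) - 1} < C/(2k)$. Because $\theta < 1$, for any $k > 1/(2(1-\theta))$ the exponent $2k(1-\theta)-1$ is positive, and the required inequality then holds on some ball $B(p, r)$.

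To finish, I would use Theorem~\ref{Thm: GF convergence and curve length estimate} to localize: the length bound $l(\gamma[0,\infty)) = O((f(\gamma(0)) - f(p))^\beta)$ tends to $0$ as $\gamma(0) \to p$, so a small enough $U$ guarantees that every GF starting in $U$ stays inside $B(p,r)$. At any $t_1$ with $\gamma(t_1) \in Z$ we then have $\gamma(t_1) \in Z \cap B(p,r)$, so $\phi'(t_1) < 0$; this at once gives $\nabla h(\gamma(t_1)) \neq 0$ (so $Z$ is a smooth hypersurface near $\gamma(t_1)$ by the implicit function theorem) and $\langle \nabla h, \dot\gamma\rangle \neq 0$, which is exactly transversality. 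Uniqueness of the intersection is immediate: just after $t_1$ one has $\phi < 0$, and a second zero $t_2 > t_1$ would force $\phi'(t_2) \geq 0$, contradicting the sign estimate at $\gamma(t_2) \in Z \cap B(p,r)$.

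The main obstacle is the exponent bookkeeping in the second step: the Lojasiewicz bound $|\nabla f(x)| \geq C|x-p|^{2k\theta}$ must beat the growth $2k|x-p|^{2k-1}$ of $\nabla(|x-p|^{2k})$, and this only becomes possible for large $k$ because $\theta < 1$ forces $2k(1-\theta) - 1$ to become positive. A subsidiary technical point is that Theorem~\ref{Thm: GF convergence and curve length estimate} is essential to keep the flow inside the ball where the Lojasiewicz estimate applies; without such localization the cross term could in principle dominate and the transversality argument would break down.
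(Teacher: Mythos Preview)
Your proposal is correct and follows essentially the same approach as the paper: both differentiate $h(x)=f(x)-f(p)-|x-p|^{2k}$ along the flow, apply the Lojasiewicz inequality on $Z$ together with Cauchy--Schwarz to force the derivative to be strictly negative at any crossing (the condition $k>1/(2(1-\theta))$ is exactly the paper's $2k\theta<2k-1$), and then conclude transversality and uniqueness of the intersection. Your explicit invocation of Theorem~\ref{Thm: GF convergence and curve length estimate} to trap the flow in the ball where the estimate holds is the justification the paper leaves implicit when it simply asserts the existence of such a neighborhood $W$.
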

\begin{proof}
    Since the result is local, we may work on a neighborhood $U$ of $p$ on which $f \geq f(p)$. First we show that for sufficiently large $k \in \bN$ and $|x - p|$ sufficiently small, when $x \in Z$ we have 
    \begin{equation*}
        \nabla f(x) \cdot \nabla (f(x) - |x - p|^{2k}) > 0. 
    \end{equation*} 
    By Lojasiewicz inequality, there is an open $U \siq V$ containing $x$ and some $C > 0$, $\theta \in [1/2, 1)$ such that for any $x \in U$, $|\nabla f(x)| \geq C |f(x) - f(p)|^\theta$. Choose $k$ so large that $2k\theta < 2k-1$. Then there is some open $U' \siq U$ containing $p$ such that for any $x \in U' \cap Z$, 
    \begin{equation*}
        |\nabla f(x)| \ge C|f(x) - f(p)|^\theta = C|x - p|^{2k\theta} > 2k |x - p|^{2k-1}. 
    \end{equation*}
    It follows that 
    \begin{align*}
        \nabla f(x) \cdot \nabla (f(x) - |x - p|^{2k}) 
        &= \nabla f(x) \cdot [\nabla f(x) - 2k |x - p|^{2k-2} (x-p)] \\ 
        &\geq \nabla f(x) \cdot [|\nabla f(x)| - 2k |x - p|^{2k-1}] \\ 
        &> 0, 
    \end{align*}
    proving our claim. 

    Now let $W \siq U'$ be a neighborhood of $p$ such that the image of any GF $\gamma: [0, \infty) \to \bR^n$ of $f$ with initial value in $W$ is contained in $U'$. For any such GF $\gamma$, note that the map 
    \begin{equation*}
        \tau: t \mapsto (f(\gamma(t)) - f(p)) - |\gamma(t) - p|^{2k}
    \end{equation*} 
    is smooth. Moreover, if $t \in \bR$ satisfies $\gamma(t) \in Z$, we have 
    \begin{align*}
        \dot{\tau}(t) 
        &= [\nabla f(\gamma(t)) - 2k|\gamma(t) - p|^{2k-2} (\gamma(t) - p)] \cdot \dot{\gamma}(t) \\ 
        &= - \nabla (f(\cdot) - 2k|(\cdot) - p|^{2k})|_{\gamma(t)} \cdot f(\gamma(t)) \\
        &< 0.
    \end{align*} 
    This implies that $\dot{\gamma}(t)$ does not lie in $T_{\gamma(t)} Z$ and $\tau(t) = 0$ for at most one $t \in \bR$. In other words, $\gamma$ intersects $Z$ at most once and the intersection is transversal. 
\end{proof}

Similar to the Morse--Bott setting, in analytic setting one may also ask what points in the critical set would be the limit of a GF. The following two results deal with this question. The first one, Lemma \ref{Lemma: dense limit set} can indeed be immediately obtained from Lojasiewicz inequality. Then I will use it to prove Corollary \ref{Cor: limit point correspondence}. 

\begin{lemma}\label{Lemma: dense limit set}
    Let $f: \bR^n \to \bR$ be a non-negative analytic or generalized Morse--Bott function. Suppose that $f^{-1}(0) \neq \emptyset$. Then the set
    \begin{equation*}
    \{p \in f^{-1}(0): \text{ there is a GF } \gamma \text{ with } \limftyt \gamma(t) = p \}  
    \end{equation*} 
    is dense in $f^{-1}(0)$. 
\end{lemma}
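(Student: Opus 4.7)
The goal is to show that for every $p \in f^{-1}(0)$ and every $\varepsilon > 0$, some point $q \in f^{-1}(0) \cap B(p, \varepsilon)$ is the $t \to \infty$ limit of a gradient flow. My plan is to launch a flow from a point $x_0$ slightly off $f^{-1}(0)$ near $p$ and let the Lojasiewicz inequality do two jobs simultaneously: it guarantees $\nabla f(x_0) \neq 0$, so the flow is non-constant (which makes the statement non-vacuous and matches the intended use in Corollary \ref{Cor: limit point correspondence}), and it forces the limit of that flow to have $f$-value zero.

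Concretely, I first apply (\ref{Lojasiewicz ineq}) at $p$, noting $f(p) = 0$ since $f \geq 0$ and $p \in f^{-1}(0)$, to get a neighborhood $V_1 \ni p$ and constants $C_1 > 0$, $\theta \in [1/2, 1)$ with $|\nabla f(q)| \geq C_1 f(q)^\theta$ on $V_1$. Next I apply Theorem \ref{Thm: GF convergence and curve length estimate} to obtain a smaller $V_2 \subseteq V_1$, $C_2 > 0$, and $\beta \in (0, 1)$ such that every non-constant GF starting in $V_2$ converges with $l(\gamma[0, \infty)) \leq C_2 f(\gamma(0))^\beta$. Shrinking once more, I may assume there is a ball $B(p, r) \subseteq V_2 \cap B(p, \varepsilon/2)$ on which $C_2 f^\beta < \varepsilon/2$. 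I then choose $x_0 \in B(p, r)$ with $f(x_0) > 0$; the Lojasiewicz bound forces $|\nabla f(x_0)| > 0$, so $\gamma_{x_0}$ is non-constant and converges to some $q$. The triangle inequality gives $|q - p| \leq l(\gamma_{x_0}) + |x_0 - p| < \varepsilon$, and because $\gamma_{x_0}[0, \infty)$ stays in $V_1$ we have $q \in V_1$; combined with $\nabla f(q) = 0$, a second application of Lojasiewicz at $q$ yields $C_1 f(q)^\theta \leq 0$, i.e.\ $f(q) = 0$.

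The only step requiring case analysis is the existence of $x_0 \in B(p, r)$ with $f(x_0) > 0$: in the analytic case, either $f$ vanishes on some neighborhood of $p$ (whence analytic continuation makes $f \equiv 0$ on $\bR^n$ and the lemma is trivial via constant flows) or else $f^{-1}(0)$ is a nowhere dense analytic subvariety and suitable $x_0$ are abundant; in the generalized Morse--Bott case, an interior point of $f^{-1}(0)$ would kill every derivative of $f$ there, contradicting the non-vanishing $D^k f(p)$ clause of the definition. The real conceptual content, and what I expect to be the hardest step to articulate cleanly, is the closing implication $f(q) = 0$: a priori the limit $q$ is only known to be a critical point of $f$, and it is precisely Lojasiewicz at $p$ that pins critical points of $V_1$ to $\{f = 0\}$. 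This is exactly the role of the non-negativity hypothesis, and it is also why the argument cannot be extended to analytic or Morse--Bott functions that fail to be bounded below by $0$ on $V_1$.
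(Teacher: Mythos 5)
Your proof is correct, and it reaches the conclusion by a direct construction rather than the paper's contradiction argument, though both hinge on exactly the same tool: Theorem \ref{Thm: GF convergence and curve length estimate}. The paper assumes some ball $V$ around a point of $f^{-1}(0)$ is free of GF-limits, restricts $f$ to $V$, invokes the convergence theorem on $f|_V$ to manufacture trajectories converging into $V \cap f^{-1}(0)$, and declares a contradiction. It does not spell out why the limit lands in $f^{-1}(0)$ rather than merely at some critical point, nor does it address why the relevant GFs are non-constant (a point that matters: every $p \in f^{-1}(0)$ is a global minimum of a non-negative $f$, hence a critical point, so constant flows make the literal statement vacuous; the non-constant reading is what Corollary \ref{Cor: limit point correspondence} actually needs). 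You handle both explicitly: Lojasiewicz at $p$ forces $|\nabla f(x_0)| > 0$ from $f(x_0) > 0$, giving a genuinely non-constant trajectory, and the same inequality applied to the limit $q \in V_1$ with $\nabla f(q) = 0$ forces $f(q) = 0$. Your case analysis for the existence of $x_0$ with $f(x_0) > 0$ nearby (analytic continuation versus the non-vanishing $D^k f$ clause) is also a detail the paper does not raise. The one small gap you leave implicit is that $\gamma_{x_0}[0,\infty) \subseteq V_1$, but this follows at once from $|x_0 - p| < \varepsilon/2$ and the curve-length bound $< \varepsilon/2$, provided $\varepsilon$ is taken small enough that $B(p,\varepsilon) \subseteq V_1$ — a harmless WLOG reduction since density only gets harder as $\varepsilon$ shrinks. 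Overall yours is a more careful, fully fleshed-out version of the same idea, traded against the brevity of the paper's contradiction.
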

\begin{proof}
    Assume this is not true. Then there would be an open ball $V \siq \bR^n$ such that $V \cap f^{-1}(0)$ contains no limit of GF that belongs to $W^s(f^{-1}(0))$. On the other hand, since $f|_V: V \to \bR$, the restriction of $f$ to $V$, is a non-negative analytic function with $f|_V^{-1}(0) = V \cap f^{-1}(0)$, there is some $U \siq V$ such that for any $x_0 \in U$, the GF of $f|_V$ with initial value $x_0$ converges to a point in $f|_V^{-1}(0)$. This is a contradiction. 
\end{proof}

In fact, more can be said about the correspondence of convergent GF and points in the set of local minima of $f$. 

\begin{cor}\label{Cor: limit point correspondence}
    Let $f: \bR^n \to \bR$ be a non-negative analytic or generalized Morse--Bott function. Suppose that $N := f^{-1}(0)$ is a non-empty critical submanifold and each $p \in N$ has a neighborhood $U$ satisfying
    \begin{itemize}
        \item [(a)] For any $x_0 \in U$, $\gamma_{x_0}$ converges to a point in $N$. 
        \item [(b)] There are some $C, \alpha > 0$ such that for any $x_0 \in U$, the curve length of $\gamma_{x_0}$ is bounded (above) by $C \dist{x_0}{N}^\alpha$. 
    \end{itemize}
    Then for any $p \in N$, there is a non-constant GF converging to $p$ as $t \to \infty$. In particular, the result holds when $f$ is analytic or generalized Morse--Bott. 
\end{cor}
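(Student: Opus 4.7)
The plan is to combine Lemma~\ref{Lemma: dense limit set} with a continuity argument for the \emph{limit-of-flow} map
\[ \Phi : U \to N, \qquad \Phi(x) := \lim_{t \to \infty} \gamma_x(t), \]
which is well-defined on a neighborhood $U$ of $p$ by hypothesis~(a). The corollary is equivalent to showing $p \in \Phi(U \setminus N)$, and the strategy is to prove this image set is closed near $p$ so that Lemma~\ref{Lemma: dense limit set} forces it to coincide with $N$ locally.

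First I would establish that $\Phi$ is continuous on $U$. For $x_k \to x_\infty$ in $U$, I would split
\[ |\Phi(x_k) - \Phi(x_\infty)| \leq |\Phi(x_k) - \gamma_{x_k}(T)| + |\gamma_{x_k}(T) - \gamma_{x_\infty}(T)| + |\gamma_{x_\infty}(T) - \Phi(x_\infty)|, \]
and bound the outer terms by $C\,\dist{\gamma_{x_k}(T)}{N}^\alpha$ and $C\,\dist{\gamma_{x_\infty}(T)}{N}^\alpha$ via hypothesis~(b) applied at the shifted initial points $\gamma_{x_k}(T)$ and $\gamma_{x_\infty}(T)$. Choosing $T$ large first makes the third term arbitrarily small (since $\gamma_{x_\infty}(T) \to \Phi(x_\infty) \in N$), and then choosing $k$ large handles the middle term via continuous dependence of ODE solutions on initial data, together with the first term since $\dist{\gamma_{x_k}(T)}{N} \to \dist{\gamma_{x_\infty}(T)}{N}$ for fixed $T$.

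With continuity of $\Phi$ in hand, given $p_k \in \Phi(U \setminus N)$ with $p_k \to p$, pick representatives $y_k \in U \setminus N$ with $\Phi(y_k) = p_k$, and reparametrize each $\gamma_{y_k}$ along its orbit to a point $\tilde y_k$ satisfying $\dist{\tilde y_k}{N} = \delta$ for a fixed small $\delta > 0$. Hypothesis~(b) applied at $\tilde y_k$ forces $|\tilde y_k - p_k| \leq C\delta^\alpha$, so $\{\tilde y_k\}$ lies in a compact neighborhood of $p$; after passing to a subsequence, $\tilde y_k \to \tilde y_\infty$ with $\dist{\tilde y_\infty}{N} = \delta > 0$, so $\tilde y_\infty \notin N$. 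Continuity of $\Phi$ then gives $\Phi(\tilde y_\infty) = \lim_k p_k = p$, making $\gamma_{\tilde y_\infty}$ a non-constant GF converging to $p$; combined with the density asserted by Lemma~\ref{Lemma: dense limit set}, this establishes the corollary.

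The main obstacle is the reparametrization: if $y_k$ already lies inside the $\delta$-tube about $N$, one must extend $\gamma_{y_k}$ backward in time until it escapes to distance $\delta$ while remaining in $U$. This requires a uniform bound on $|\nabla f|$ over a compact neighborhood of $p$ (so the backward flow moves at controlled speed) together with a Lojasiewicz-type lower bound on $|\nabla f|$ (which forces $f$, and hence the distance to $N$ via the Lojasiewicz distance inequality, to grow at a definite rate along the backward orbit). Choosing $\delta$ small enough relative to the size of $U$ makes the reparametrization succeed uniformly for all large $k$.
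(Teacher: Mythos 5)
Your proposal follows essentially the same route as the paper: Lemma~\ref{Lemma: dense limit set} supplies a sequence of GF limits $p_j \to p$, one picks a point on each corresponding orbit at a fixed ``boundary distance,'' extracts a convergent subsequence by compactness, and then combines continuous dependence on initial data with the curve-length bound of hypothesis~(b) to show the limiting GF converges to $p$. The chief cosmetic difference is that you isolate the underlying estimate as \emph{continuity of the limit map} $\Phi$, whereas the paper runs the estimate in-line via a Gronwall comparison on arc-length reparametrized flows $u_j, u$ (introduced so that all trajectories become constant after a uniform finite time $T'$); your fixed-$T$ tail bound $|\Phi(x_k) - \gamma_{x_k}(T)| \le C\,\dist{\gamma_{x_k}(T)}{N}^\alpha$ accomplishes the same thing without that reparametrization trick. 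You also use a $\delta$-tube around $N$ where the paper uses $\partial V$ for a compact neighborhood $V$ of $p$; these are interchangeable, and the obstacle you correctly flag — whether the orbits actually reach the chosen boundary so that the reparametrization is possible — is present, implicitly and unaddressed, in the paper's claim that ``there is a largest $t_j$ such that $\gamma_j(t_j) \in \partial V$.'' Your remark on how to close it (a uniform upper bound on $|\nabla f|$ plus the Lojasiewicz lower bound to control escape from the tube under backward flow) is a reasonable sketch of a fix that the paper does not attempt. In short: same proof, modestly cleaner organization, and you are honest about a step the paper glosses over.
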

\begin{proof}
    By Lemma \ref{Lemma: dense limit set}, there is a sequence $\{p_j\}_{j=1}^\infty$ and a sequence of GF $\{\gamma_j\}_{j=1}^\infty$ such that $\limftyt \gamma_j(t) = p_j \in N$ and $\limftyj p_j = p$. Choose a compact neighborhood $V \siq U$ of $p$ (so $\bar{V} \siq U$) For each $j$, there is a largest $t_j \in \bR$ such that $x_j := \gamma_j(t_j) \in \partial V \cap \gamma_j$. Since $\partial V$ is compact, the sequence $\{x_j\}_{j=1}^\infty$ has an accumulation point $x$ in $\bar{V}$, Moreover, hypothesis (b) implies that $x \notin N$. Since $x \in U$, the GF $\gamma_x: [0, \infty) \to \bR^n$ converges to a point in $N$ and its curve length is bounded by $C \dist{x}{N}^\alpha$. 

    Let $l_j$ be the curve length of $\gamma_j[t_j, \infty)$ and $l$ be the curve length of $\gamma_x$. For each $j \in \bN$, define 
    \begin{align*}
        &u_j: [0, \infty) \to \bR^n, \\ 
        &\dot{u}_j(t) = - \frac{\nabla f(u_j(t)) \chi_{[0, l_j)}}{|\nabla f(u_j(t))|}, \quad u_j(0) = x_j
    \end{align*} 
    and similarly, define 
    \begin{align*}
        &u: [0, \infty) \to \bR^n, \\ 
        &\dot{u}(t) = - \frac{\nabla f(u(t)) \chi_{[0, l)}}{|\nabla f(u(t))|}, \quad u(0) = x. 
    \end{align*}
    In the formulas above, $\chi_E$ denotes the characteristic function of $E$, namely, $\chi_E(x) = 1$ for $x \in E$ and $\chi_E(x) = 0$ otherwise. Also note that the $u_j$'s and $u$ are exactly the trajectories of GFs. 

    Fix $\vep > 0$. Choose any $k \in \bN$ with $|x - x_k| < \vep$. There is some $T > 0$ such that for any $t > T$, we have $\dist{u_j(t)}{N} < \vep^{1/\alpha}$ and $\dist{u(t)}{N} < \vep^{1/\alpha}$. Then the Grownwall's inequality and hypothesis (b) yield 
    \begin{align*}
        \left|\limftys u(s) - \limftys u_k(s) \right| 
        &\leq |\limftys u(s) - u(t)| + |u(t) - u_k(t)| + |u_k(t) - p_k| \\ 
        &\leq C \dist{u(t)}{N}^\alpha + \exp(t) \vep + C \dist{u_k(t)}{N}^\alpha \\ 
        &\leq (2C + \exp(t)) \vep. 
    \end{align*} 
    Since $\{x\} \cup \{x_j\}_{j=1}^\infty$ is a subset of the bounded $\partial V$, it follows that $\sup \{l, l_1, l_2, ...\} < \infty$; in particular, there is some $T' > 0$ such that the $u_j$'s and $u$ are all constant on $[T', \infty)$. Thus, we actually have 
    \begin{equation*}
        \left|\limftys u(s) - \limftys u_k(s) \right| \leq (2C + \exp(T'))\vep
    \end{equation*} 
    Letting $\vep \to 0$, we see that $\limftys u(s) = \lim_{k \to \infty} p_k = p$, which means $u$, and thus $\gamma_x$, converges to $p$. This shows the first part of the corollary.  

    Now suppose that $f$ is analytic or generalized Morse--Bott. Let $p \in N$. By Theorem \ref{Thm: GF convergence and curve length estimate}, there is a bounded neighborhood $U$ of $p$ and some $\beta > 0$ such that for any $x_0 \in U$, $\gamma_{x_0}$ converges at rate $\beta$. Since $U$ is bounded and $f$ is smooth, $f$ is Lipschitz on $U$, so there is some $L > 0$ with $|f(x) - f(y)| \leq L|x - y|$ for any $x, y \in U$. It follows that the curve length of any such $\gamma_{x_0}$, $l(\gamma_{x_0})$, can be estimated by 
    \[ l(\gamma_{x_0}) \leq: \tilde{C} |f(x_0)|^\beta \leq |f(x_0) - 0|^\beta \leq \tilde{C}L^\beta |x_0 - q|^\beta, \]
    where $q \in N$ is any point satisfying $|x_0 - q| = \dist{x_0}{N}$ ($q$ exists because $N$ is closed). This shows $f$ satisfies the hypotheses and the desired result follows. 
\end{proof}

\printbibliography\label{section 6} 

\end{document}